\newcommand{\KRM}{{K^M}}
\newcommand{\ges}{{\geqslant}}
\newcommand{\les}{{\leqslant}}
\newcommand{\tope}{\operatorname{top}}
\newcommand{\card}{\operatorname{card}}
\newcommand{\chr}{\operatorname{char}}
\newcommand{\cent}[1]{{#1}^{\mathsf c}}
\newcommand{\col}{\colon}
\newcommand{\dd}{\partial}
\newcommand{\depth}{\operatorname{depth}}
\newcommand{\grade}{\operatorname{grade}}
\newcommand{\reg}{\operatorname{reg}}
\newcommand{\preg}[3]{{\operatorname{reg}}_{#1}^{#2}(#3)}
\newcommand{\hh}[1]{\operatorname{H}(#1)}
\newcommand{\HH}[2]{\operatorname{H}_{#1}(#2)}
\newcommand{\id}{\operatorname{id}}
\newcommand{\image}{\operatorname{Im}}
\newcommand{\Ker}{\operatorname{Ker}}
\newcommand{\Coker}{\operatorname{Coker}}
\newcommand{\pd}{\operatorname{pd}}
\newcommand{\rank}{\operatorname{rank}}
\newcommand{\Ext}[4]{\operatorname{Ext}^{#1}_{#2}(#3,#4){}}
\newcommand{\Hom}[3]{\operatorname{Hom}_{#1}(#2,#3)}
\newcommand{\Tor}[4]{\operatorname{Tor}_{#1}^{#2}(#3,#4){}}
\newcommand{\bsi}{{\boldsymbol{i}}}
\newcommand{\bsn}{{\boldsymbol{n}}}
\newcommand{\xra}{\xrightarrow}
\newcommand{\st}{$\ast$}
\newcommand{\bc}{$\scriptstyle\otriangleup$}
\newcommand{\et}{$\scriptstyle\triangledown$}
\newcommand{\ze}{$\scriptstyle\ovoid$}
\newcommand{\bu}{$\scriptstyle\textbf{0}$}
\newcommand{\ci}{$\scriptstyle\textbf{0}$}
\newcommand{\TTor}{\operatorname{Tor}}
\newcommand{\BN}{{\mathbb N}}
\newcommand{\BR}{{\mathbb R}}
\newcommand{\BZ}{{\mathbb Z}}
\theoremstyle{plain}
\newtheorem{theorem}{Theorem}[section]
\newtheorem{proposition}[theorem]{Proposition}
\newtheorem{lemma}[theorem]{Lemma}
\newtheorem{corollary}[theorem]{Corollary}
\theoremstyle{definition}
\newtheorem{example}[theorem]{Example}
\newtheorem{conjecture}[theorem]{Conjecture}
\newtheorem{examples}[theorem]{Examples}
\newenvironment{bfchunk}{\begin{chunk}\textbf}{\end{chunk}}
\newtheorem{chunk}[theorem]{}
\theoremstyle{remark}
\newtheorem*{Remark}{Remark}
\numberwithin{equation}{theorem}
\begin{document}

\title[Subadditivity of syzygies of Koszul algebras]
{Subadditivity of syzygies of Koszul algebras}

\author[L.~L.~Avramov]{Luchezar L.~Avramov}
\address{Department of Mathematics,
  University of Nebraska, Lincoln, NE 68588, U.S.A.}
\email{avramov@math.unl.edu}

\author[A.~Conca]{Aldo~Conca}
\address{Dipartimento di Matematica, 
Universit«a di Genova, Via Dodecaneso 35, 
I-16146 Genova, Italy}
\email{conca@dima.unige.it}

\author[S.~B.~Iyengar]{Srikanth B.~Iyengar}
\address{Department of Mathematics,
  University of Nebraska, Lincoln, NE 68588, U.S.A.}
\email{s.b.iyengar@unl.edu}
\date{\today}
\thanks{Part of this article is based on work supported by the National Science Foundation under Grant 
No.\,0932078000, while the authors were in residence at the Mathematical Sciences Research Institute 
in Berkeley, California, during the 2012--2013 Special Year in Commutative Algebra.  LLA was partly 
supported by NSF grant DMS-1103176 and a Simons Visiting Professorship; SBI was partly supported 
by NSF grant DMS-1201889 and a Simons Fellowship.}

  \begin{abstract}
Estimates are obtained for the degrees of minimal syzygies of quotient algebras of polynomial rings. 
For a class that includes Koszul algebras in almost all characteristics, these degrees are shown to 
increase by at most $2$ from one syzygy module to the next one.  Even slower growth is proved if, 
in addition, the algebra satisfies Green and Lazarsfeld's condition $N_q$ with $q\ge2$.
  \end{abstract}
  
\keywords{Castelnuovo-Mumford regularity, Koszul algebra, syzygy}

\subjclass[2010]{Primary 13D02, 16S37}

\date{\today}
 
\maketitle

\section*{Introduction} 

In this paper we study homological properties of commutative graded algebras $R$, generated 
over a field $k$ by finitely many elements of degree one.  A common approach is to choose a presentation 
$R\cong S/J$, where $S$ is a standard graded polynomial ring over $k$ and $J$ an ideal containing no linear 
forms, and use the minimal free resolution $F$ of the graded $S$-module $R$.  In this context, the numbers
 \[
t_i^S(R)=\sup_{j\ges0}\{\TTor^S_i(R,k)_j\ne0\}
  \]
are of significant interest, as they bound the degrees of the basis elements of $F_i$.

Explicit upper bounds exist in terms of the most accessible data: $e$, the number of variables of $S$ and 
$t_1^S(R)$, the maximal degrees of the generators of $J$.  They are doubly exponential in $i$ and 
cannot be strengthened in general; see \ref{ch:exponential} for references. 

The situation is dramatically different when $R$ is Koszul; that is, when the minimal free resolution of $k=R/R_+$ 
over $R$ is linear; equivalently, when $t_{i}^{R}(k)\leq i$ for all $i \ge 0$. For Koszul algebras   Backelin \cite{Ba} 
and Kempf \cite{Ke} proved inequalities 
 \begin{equation}
 \label{eq:intro1}\tag{1}
t_i^S(R)\leq 2i
   \quad\text{for}\quad 1\le i\le \pd_SR\,.
 \end{equation}
Cases when equality hold are described in \cite{ACI}.

Here we extend and sharpen these results in several directions.

First, we relax the hypothesis, assuming only $t_{i}^{R}(k)\leq i$ for $i\le e+1$.  Such algebras are not necessarily 
Koszul and---unlike the latter---they are identified by a fixed finite segment of the minimal free resolution of $k$ 
over $R$; see~Roos \cite{Ro}. In Section~\ref{Decomposable} we prove that they satisfy \eqref{eq:intro1} and 
determine when equality holds.  

Second, for the algebras described above we investigate the existence of upper bounds on 
$t_i^S(R)$ that depend on $t_a^S(R)$ for $1\le a<i$.  In Section \ref{applications} we prove  
  \begin{equation}
 \label{eq:intro2}\tag{2}
 t_{i+1}^S(R)\leq t_{i}^S(R)+2 
   \quad\text{for}\quad 1\le i\le e-\dim R+1
  \end{equation}
when $(i+1)$ is invertible in $k$;  since then $t_1^S(R)=2$, this inequality refines \eqref{eq:intro1}
under additional hypotheses. It is itself the special case $b=1$ of the following inequality:
  \begin{equation}
  \label{eq:intro3}
t_{a+b}^S(R)\leq t_{a}^S(R)+t_{b}^S(R) 
    \quad\text{for}\quad a,b\ge1
    \text{ with }  a+b\le\pd_SR\,.
  \tag{3}
  \end{equation}
We conjecture that \eqref{eq:intro3} holds without restrictions on the characteristic of $k$ and
review evidence gleaned from a number of sources.  In particular, here we show that when 
$R$ is Cohen-Macaulay and $\binom{a+b}a$ is invertible in $k$ one has
  \[
 t_{a+b}^S(R)\leq t_{a}^S(R)+t_{b}^S(R)+1 
    \quad\text{for}\quad a\ge1
    \text{ and } b\ge2
    \text{ with } a+b\le\pd_SR\,.
  \]

For the algebras described above we study the impact of the Green--Lazarsfeld condition $N_q$:  
The differential $F_i\to F_{i-1}$ is given by a matrix of quadrics for $i=1$ and by matrices of linear 
forms for $2\le i\le q$; see \cite{GL}.  In Section \ref{applications2} we show that (excluding finitely 
many specified characteristics) it implies a sharpening of \eqref{eq:intro1}:
 \begin{equation}
   \label{eq:intro5}\tag{4}
t_i^S(R)\leq 
2\left\lfloor \frac{i}{q+1}\right\rfloor+i+
\begin{cases}
0 & \text{if}\quad (q+1)|i
  \\
1& \text{otherwise. }
\end{cases}
  \end{equation}
This inequality reveals hitherto unknown properties of the resolutions of algebras of geometric interest,  
even in cases such as Segre products and Veronese subalgebras of polynomial rings. For related recent 
progress in this context see \cite{Ru, Sn}.

Our approach utilizes the homology of the Koszul complex $K$ on a basis of~$R_1$.  

The differential bigraded algebra $K$ provides a bridge between the homology of the $R$-module $k$ 
and that of the $S$-module $R$; see \cite{Av:barca}.  As in \cite{ACI}, the proof of \eqref{eq:intro1} 
depends on the decomposable classes in $\hh K$ contained in its subalgebra generated by $\HH1K$.  
On the other hand, the proofs of \eqref{eq:intro2} and \eqref{eq:intro5} involve an analysis of 
indecomposable homology classes; namely, of the quotient $\HH{a+b}K/\HH aK\cdot\HH bK$.  

This is the main technical innovation in the paper.  Its effectiveness depends on the existence (in almost 
all characteristics) of splitting maps for Koszul cycles, discovered in \cite{BCR2} and described in Section 
\ref{KoszulComplexes} from a more conceptual perspective. These maps are combined with cascades
of bounds on regularities of Tor modules, obtained in Sections \ref{Flat} and \ref{mainthm}.  The material 
in the first three sections is presented in its natural generality, which is much greater than the one needed 
in the sequel.
  
\section{Complexes of flat modules} 
   \label{Flat}
   
In this section we evaluate how cycles change with the introduction of coefficients, starting from
a general situation and moving on to a graded one.  Only basic homological 
considerations are involved.

  \begin{bfchunk}{Coefficients.}
    \label{ch:phi}
Let $R$ be an associative ring and $\cent R$ its center.  Let 
  \begin{equation}
       \label{eq:F}
F=\quad \cdots\to F_{a+1}\xra{\,\dd_{a+1}\,} F_a\xra{\,\dd_a\,} F_{a-1}\to \cdots
  \end{equation}
be a complex of right $R$-modules, and for every integer $a$ set 
  \[
Z_a=\Ker(\dd_a)\,,\quad
B_a=\image(\dd_{a+1})\,,\quad
H_a=Z_a/B_a\,,\quad
\text{and}\quad
C_{a}=\Coker(\dd_{a+1})\,.
  \]
  
For each left $R$-module $N$ and for $Z_a(F\otimes_R N)=\Ker(\dd_a\otimes N)$ the assignment 
$z\otimes x\mapsto z\otimes x$ defines a natural in $N$ homomorphism of abelian groups
  \begin{equation}
       \label{eq:phi}
\phi_{a}\col Z_a\otimes_RN \to Z_a(F\otimes_R N).
  \end{equation}
  \end{bfchunk}

The next lemma is a sort of ``universal coefficients theorem'' for cycles.

\begin{lemma}
\label{lem:serra}
When $F_a$ and $F_{a-1}$ are flat there is a natural in $N$ exact sequence
  \[
0\to  \TTor_1^R(B_{a-1},N)\to Z_a\otimes_R N\xra{\,\phi_a\,} Z_a(F\otimes_R N)\to \TTor_1^R(C_{a-1},N) \to 0
  \]   
of $\cent R$-modules, and for every $i\ge 1$ there is a natural in $N$ isomorphism
  \begin{align}
  \label{eq:serra0}
\TTor_{i+1}^R(B_{a-1},N)&\cong\TTor_{i}^R(Z_a,N)\,.
  \end{align}
 \end{lemma}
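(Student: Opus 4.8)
The plan is to realize both statements simultaneously by splicing together two short exact sequences coming from the definitions of $Z_a$, $B_{a-1}$, and $C_{a-1}$, and then feed them through the long exact sequence of $\TTor$. The key structural input is that the differential $\dd_a$ factors as $F_a \tra B_{a-1} \hra F_{a-1}$, with $Z_a=\Ker(\dd_a)$, and that $C_{a-1}=\Coker(\dd_a)$ fits into $0\to B_{a-1}\to F_{a-1}\to C_{a-1}\to 0$.

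First I would extract the two fundamental short exact sequences of right $R$-modules:
\begin{equation*}
0\to Z_a\to F_a\to B_{a-1}\to 0
\qquad\text{and}\qquad
0\to B_{a-1}\to F_{a-1}\to C_{a-1}\to 0\,.
\end{equation*}
Tensoring the first with $N$ and invoking the long exact sequence in $\TTor^R_{\bullet}(-,N)$, the flatness of $F_a$ forces $\TTor_i^R(F_a,N)=0$ for $i\ge1$; the resulting connecting maps then yield the natural isomorphisms $\TTor_{i+1}^R(B_{a-1},N)\cong\TTor_i^R(Z_a,N)$ for all $i\ge1$, which is exactly \eqref{eq:serra0}. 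In the same long exact sequence, the tail produces
\begin{equation*}
0\to \TTor_1^R(B_{a-1},N)\to Z_a\otimes_R N\to F_a\otimes_R N\to B_{a-1}\otimes_R N\to 0\,,
\end{equation*}
so $Z_a\otimes_R N$ maps into $F_a\otimes_R N$ with kernel $\TTor_1^R(B_{a-1},N)$. The image of $Z_a\otimes_R N$ in $F_a\otimes_R N$ is thus canonically $\Ker(\dd_a\otimes N)\cap(\text{image of }\phi_a)$; I would identify it with $\Ker(\dd_a\otimes N)=Z_a(F\otimes_R N)$ up to the discrepancy measured by the second sequence.

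Next I would analyze how much $\phi_a$ fails to be surjective onto $Z_a(F\otimes_R N)$. Applying $-\otimes_R N$ to the second short exact sequence and using flatness of $F_{a-1}$, the comparison of $\image(\dd_a\otimes N)\subseteq F_{a-1}\otimes_R N$ with $B_{a-1}\otimes_R N$ is controlled precisely by $\TTor_1^R(C_{a-1},N)$: there is a natural exact sequence $0\to\TTor_1^R(C_{a-1},N)\to B_{a-1}\otimes_R N\to F_{a-1}\otimes_R N$. Splicing this identification of $\TTor_1^R(C_{a-1},N)$ as the kernel of $B_{a-1}\otimes_R N\to F_{a-1}\otimes_R N$ with the earlier surjection $F_a\otimes_R N\tra B_{a-1}\otimes_R N$, and tracking that $Z_a(F\otimes_R N)=\Ker(\dd_a\otimes N)$ is the preimage under $\dd_a\otimes N$ of exactly this $\TTor_1^R(C_{a-1},N)$, gives the desired identification $\Coker(\phi_a)\cong\TTor_1^R(C_{a-1},N)$ and assembles the four-term exact sequence.

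The main obstacle is the bookkeeping in the middle: one must verify that the copy of $B_{a-1}\otimes_R N$ appearing as the image of $F_a\otimes_R N$ and the copy appearing as a submodule of $F_{a-1}\otimes_R N$ are matched compatibly, so that the cokernel of $\phi_a$ is genuinely $\TTor_1^R(C_{a-1},N)$ rather than some extension of it. I expect this to follow by a careful diagram chase identifying the natural map $B_{a-1}\otimes_R N\to F_{a-1}\otimes_R N$ with the inclusion-induced map and checking that $\dd_a\otimes N$ lands a cycle in $F_a\otimes_R N$ precisely when its image in $B_{a-1}\otimes_R N$ dies in $F_{a-1}\otimes_R N$. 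Naturality in $N$ is automatic throughout, since every map in sight is a connecting homomorphism or a functorially induced map of $\TTor$; the $\cent R$-module structure is likewise inherited functorially and requires no separate argument.
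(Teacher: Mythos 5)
Your proposal is correct and follows essentially the same route as the paper: both factor $\dd_a$ as $F_a\tra B_{a-1}\hra F_{a-1}$, tensor the two short exact sequences $0\to Z_a\to F_a\to B_{a-1}\to 0$ and $0\to B_{a-1}\to F_{a-1}\to C_{a-1}\to 0$ with $N$, and use flatness of $F_a$ and $F_{a-1}$ to identify $\Ker(\phi_a)$ with $\TTor_1^R(B_{a-1},N)$, $\image(\phi_a)$ with $\Ker(\pi_a\otimes_RN)$, and $\Coker(\phi_a)$ with $\Ker(\iota_{a-1}\otimes_RN)\cong\TTor_1^R(C_{a-1},N)$, the isomorphisms \eqref{eq:serra0} coming from the same long exact sequence. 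The only difference is presentational: the compatibility check you describe in your final paragraph (that a chain in $F_a\otimes_RN$ is a cycle exactly when its image in $B_{a-1}\otimes_RN$ dies in $F_{a-1}\otimes_RN$, i.e.\ the factorization $\dd_a\otimes N=(\iota_{a-1}\otimes N)(\pi_a\otimes N)$) is packaged in the paper as an application of the Snake Lemma to an explicit commutative diagram.
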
 

 \begin{proof} 
The maps $F_a\xra{\pi_{a}}B_{a-1}$ and $B_{a-1}\xra{\iota_{a-1}}F_{a-1}$  yield a commutative diagram 
  \begin{equation*}
\xymatrixcolsep{1pc}
\xymatrixrowsep{2pc} 
\xymatrix{
0 
\ar@{->}[r]
&\Ker(\pi_{a}\otimes_RN)
\ar@{->}[r]
\ar@{->}[d]
& F_a\otimes_R N
\ar@{->}[rr]^-{\pi_a\otimes_RN}
\ar@{->}[d]^-{\dd_a\otimes_RN}
&& B_{a-1}\otimes_R N
\ar@{->}[r]
\ar@{->}[d]^-{\iota_{a-1}\otimes_RN}
&0
  \\
0 
\ar@{->}[r]
&0 
\ar@{->}[r]
& F_{a-1}\otimes_R N
\ar@{=}[rr]
&& F_{a-1}\otimes_R N
}
     \end{equation*}
with exact rows.  The Snake Lemma then produces an exact sequence
  \[
0\to\Ker(\pi_{a}\otimes_RN)\to Z_a(F\otimes_R N) \to \Ker(\iota_{a-1}\otimes_RN) \to 0
  \]
As $F_a$ is flat, the exact sequence $0\to Z_a\to F_a\xra{\pi_a} B_{a-1}\to 0$ induces the isomorphisms 
\eqref{eq:serra0} and yields an exact sequence 
  \begin{align}
  \notag
0\to  \TTor_1^R(B_{a-1},N)&\to 
Z_a\otimes_R N\xra{\,\zeta_a\,} \Ker(\pi_{a}\otimes_RN)\to0
  \end{align}
such that the composition of $\zeta_a$ with the inclusion $Z_a(F\otimes_R N)\subseteq F_a\otimes_R N$
is equal to $\phi_a$.  As $F_{a-1}$ is flat we also have exact sequence
  \[
0\to \TTor^R_1(C_{a-1},N)  \to B_{a-1}\otimes_R N\xra{\iota_{a-1}\otimes_RN} F_{a-1}  \otimes_R N 
  \]
induced by $0\to B_{a-1}\xra{\iota_{a-1}} F_{a-1} \to C_{a-1}\to 0$; it computes $\Ker(\iota_{a-1}\otimes_RN)$.
 \end{proof}

Next we describe notation and conventions concerning gradings in this paper.

   \begin{bfchunk}{Graded objects.}
     \label{ch:graded}
Let $k$ be a field and $V=\bigoplus_jV_j$ a graded vector space; set 
  \[
\tope(V)=\sup\{ j \mid V_j\neq 0\}\,;
  \]
thus, $\tope(V)=-\infty$ if and only if $V=0$.  The notation $\deg(v)=j$ means $v\in V_j$.

A \emph{graded $k$-algebra} is a graded vector space $R=\bigoplus_{i\in\BZ}R_i$ with 
$R_0=k$, $R_i=0$ for $i<0$, and associative $k$-bilinear products $R_i\times R_j\to R_{i+j}$. 
A \emph{left graded $R$-module} is a graded vector space $N=\bigoplus_{j\in\BZ}N_j$ with 
$N_j=0$ for $j\ll0$ and associative $k$-bilinear products $R_i\times N_j\to N_{i+j}$.  
\emph{Graded right $R$-modules} are defined similarly.  

\emph{Homomorphisms} of graded $R$-modules are homogeneous of degree~$0$. Thus, 
when $L$ is a right $R$-module and $N$ a left one, $\TTor_i^R(L,N)$ is a graded $\cent R$-module.

As usual, given an graded $R$-module $N$ and integer $n$, we write $N(n)$ for the graded $R$-module with $N(n)_{i}=N_{n+i}$ for all $i\in\BZ$.

By abuse of notation, $k$ also stands for the module $R/\bigoplus_{i\ges1}R_i$.  The number
  \[
t_i^R(N)=\tope(\TTor_i^R(k,N))
  \]
then equals the largest degree of a minimal $i$-syzygy of the $R$-module $N$.
   \end{bfchunk}
  
  \begin{lemma} 
\label{elem1}
When $L$ is a right graded $R$-module and $N$ a left one, one has
  \begin{equation}
  \label{eq:elem1}
\tope(\TTor_i^R(L,N))\leq \tope(L)+t_i^R(N)
\quad\text{for all}\quad
i\in\BZ\,.
  \end{equation}
\end{lemma}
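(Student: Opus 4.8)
The plan is to compute $\TTor_i^R(L,N)$ from a minimal graded free resolution of $N$ and to read the bound off the degrees of its free generators. First I would dispose of the trivial cases: the asserted inequality holds automatically when $L=0$ or when $\tope(L)=\infty$, so I would assume $L\ne0$ and $\tope(L)<\infty$.

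Next I would invoke the minimal graded free resolution $P\xra{\simeq}N$ of $N$ by left $R$-modules, which exists because $R$ is connected graded and $N$ is bounded below; writing $P_i=\bigoplus_j R(-j)^{\beta_{ij}}$ with $\beta_{ij}\in\BN$, minimality forces the differentials of $P$ to have entries in $\bigoplus_{l\ge1}R_l$. Tensoring on the left with $k$ then kills every differential, so $\TTor_i^R(k,N)=k\otimes_RP_i=\bigoplus_j k(-j)^{\beta_{ij}}$, and hence $t_i^R(N)=\sup\{j\mid\beta_{ij}\ne0\}$. This identification of $t_i^R(N)$ with the top degree of the generators of $P_i$ is the crux of the argument.

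I would then compute the same Tor from the same resolution as $\TTor_i^R(L,N)=H_i(L\otimes_RP)$. Since tensoring commutes with direct sums and $L\otimes_RR(-j)\cong L(-j)$, one has $L\otimes_RP_i\cong\bigoplus_j L(-j)^{\beta_{ij}}$, whose top degree is $\tope(L)+\sup\{j\mid\beta_{ij}\ne0\}=\tope(L)+t_i^R(N)$. Because $\TTor_i^R(L,N)$ is a subquotient of the graded vector space $L\otimes_RP_i$, and passing to a graded submodule or a graded quotient never raises the top degree, I would conclude $\tope(\TTor_i^R(L,N))\le\tope(L)+t_i^R(N)$, as claimed.

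The hard part is not really a computation but a point of hygiene: one must use a \emph{minimal} resolution, since a nonminimal one would place generators in higher degrees and yield only a weaker bound, and one must be sure that such a minimal resolution exists for a module that is merely bounded below, over an $R$ that need not be Noetherian. The remaining ingredients—degree shifts, exactness of $L\otimes_R(-)$ on free modules, and the subquotient estimate on top degrees—are routine.
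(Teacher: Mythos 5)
Your argument is correct, but it takes a genuinely different route from the paper's. You resolve the \emph{second} variable: choose a minimal graded free resolution $P$ of $N$, use minimality to identify $t_i^R(N)$ with the supremum of the generator degrees of $P_i$, and then bound $\TTor_i^R(L,N)=H_i(L\otimes_RP)$ as a graded subquotient of $L\otimes_RP_i\cong\bigoplus_j L(-j)^{\beta_{ij}}$. The paper instead performs a d\'evissage on the \emph{first} variable: setting $t=\tope(L)$ and $s=t-\min\{j\mid L_j\ne0\}$, it inducts on the spread $s$. In the base case $s=0$ one has $LR_+=0$, so $L$ is a direct sum of copies of $k(-t)$ as a right $R$-module, whence $\TTor_i^R(L,N)\cong L\otimes_k\TTor_i^R(k,N)$ and equality holds in \eqref{eq:elem1}; the inductive step uses the long exact sequence of Tor induced by $0\to L_t(-t)\to L\to L/L_t(-t)\to0$. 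The trade-off is this: the paper's proof uses nothing beyond long exact sequences, so it never has to address the point you rightly flag as the crux, namely the existence of minimal graded free resolutions when $R$ need not be commutative or Noetherian and $N$ is merely bounded below and possibly infinitely generated. That existence is nevertheless standard---the construction needs only the graded Nakayama lemma, which for bounded-below modules over a connected graded algebra requires no finiteness hypotheses---so your proof is complete once this is cited or proved. In exchange, your approach is non-inductive, pins down $t_i^R(N)$ as exactly the top generator degree of the minimal resolution, and yields the slightly stronger structural fact that $\TTor_i^R(L,N)$ is a graded subquotient of $\bigoplus_j L(-j)^{\beta_{ij}}$, from which the inequality falls out for all $i$ at once.
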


\begin{proof}  
Set $t=\tope(L)$. We may assume $t$ is finite, and then $s=t-\min\{j \mid L_j\neq 0\}$ is a 
non-negative integer.  If $s=0$, then $\TTor_i^R(L,N)\cong L\otimes_k\TTor_i^R(k,N)$, 
so equality holds in \eqref{eq:elem1}.  The exact sequence $0\to L_t(-t)\to L\to L/L_t(-t)\to0$
of graded $R$-modules induces an exact sequence of graded $\operatorname{Tor}$ vector 
spaces, from which the inequality \eqref{eq:elem1} follows by induction on $s$.
\end{proof} 

\begin{lemma} 
\label{elem2}
Let $R$ be a graded algebra and $F$ a complex of flat graded $R$-modules. For all integers $i\ge1$ and $a\geq 0$ the following inequalities hold:
  \begin{align}
    \label{eq:elem2.B}
\tope(\TTor_i^R(B_a,N))&\leq \max_{0\les j\les a}\{ \tope(H_j)+t^R_{a+i+1-j}(N)\}
  \\
    \label{eq:elem2.Z}
\tope(\TTor_i^R(Z_a,N))&\leq \max_{0\les j< a}\{ \tope(H_j)+t^R_{a+i+1-j}(N)\}
  \\
    \label{eq:elem2.C}
\tope(\TTor_i^R(C_{a-1},N))&\leq \max_{0\les j< a}\{ \tope(H_j)+t^R_{a-1+i-j}(N)\}
  \end{align}
  \end{lemma}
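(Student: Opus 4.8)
The plan is to establish the three estimates \eqref{eq:elem2.B}, \eqref{eq:elem2.Z}, \eqref{eq:elem2.C} together by induction on $a$, using three short exact sequences and the dimension-shifting isomorphisms of Lemma~\ref{lem:serra}. Because every $F_a$ is flat, that lemma applies at each spot; in particular \eqref{eq:serra0} provides natural isomorphisms $\TTor_i^R(Z_a,N)\cong\TTor_{i+1}^R(B_{a-1},N)$ for $i\ges1$. The sequences I would use are
\[
0\to Z_a\to F_a\to B_{a-1}\to0,\qquad
0\to B_a\to Z_a\to H_a\to0,\qquad
0\to H_{a-1}\to C_{a-1}\to B_{a-2}\to0,
\]
where the last arises from the surjection $C_{a-1}=F_{a-1}/B_{a-1}\tra F_{a-1}/Z_{a-1}\cong B_{a-2}$, whose kernel is $H_{a-1}$. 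Throughout I abbreviate $T_i(M)=\tope(\TTor_i^R(M,N))$ and record that Lemma~\ref{elem1} gives $T_i(H_j)\les\tope(H_j)+t_i^R(N)$.

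The inductive engine is three reductions, each a single long exact sequence or isomorphism followed by a comparison of bounds. First, \eqref{eq:serra0} identifies $T_i(Z_a)$ with $T_{i+1}(B_{a-1})$, and inserting \eqref{eq:elem2.B} read at $(a-1,i+1)$ (that is, with $a,i$ replaced by $a-1,i+1$) reproduces the right-hand side of \eqref{eq:elem2.Z}: the exponent $(a-1)+(i+1)+1-j=a+i+1-j$ is unchanged and the range $0\les j\les a-1$ is exactly $0\les j<a$. Second, the long exact sequence of the middle row yields $T_i(B_a)\les\max\{T_{i+1}(H_a),\,T_i(Z_a)\}$, where $T_{i+1}(H_a)\les\tope(H_a)+t_{i+1}^R(N)$ supplies precisely the missing $j=a$ term of \eqref{eq:elem2.B} while $T_i(Z_a)$ is governed by \eqref{eq:elem2.Z}; hence \eqref{eq:elem2.B} follows from \eqref{eq:elem2.Z} at the same spot. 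Third, the long exact sequence of the last row gives $T_i(C_{a-1})\les\max\{T_i(H_{a-1}),\,T_i(B_{a-2})\}$, in which $T_i(H_{a-1})\les\tope(H_{a-1})+t_i^R(N)$ is the $j=a-1$ summand of \eqref{eq:elem2.C} and $T_i(B_{a-2})$ is bounded by \eqref{eq:elem2.B} at $(a-2,i)$, matching the remaining summands $0\les j\les a-2$.

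Assembling these, the induction on $a$ runs as follows: the first two reductions carry \eqref{eq:elem2.Z} and then \eqref{eq:elem2.B} from level $a-1$ to level $a$, and once \eqref{eq:elem2.B} is known for all levels the third reduction delivers \eqref{eq:elem2.C} uniformly in $i\ges1$. This uniformity is the payoff of routing $C_{a-1}$ through $0\to H_{a-1}\to C_{a-1}\to B_{a-2}\to0$ rather than through $0\to B_{a-1}\to F_{a-1}\to C_{a-1}\to0$, the latter forcing a separate, clumsier analysis of $i=1$ via $\TTor_0$. The base case is $a=0$, where $Z_0=F_0$ is flat, so $\TTor_i^R(Z_0,N)=0$ for $i\ges1$ and \eqref{eq:elem2.Z} holds with empty right-hand side; the middle row then gives \eqref{eq:elem2.B}, and \eqref{eq:elem2.C} is vacuous since $C_{-1}=0$. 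I expect the principal difficulty to be bookkeeping rather than conceptual: one must verify at each step that the dimension shift carries the index $a+i+1-j$ and the range of $j$ onto exactly those in the target, and must handle the boundary spots---$a=0$, and $a=1$ for \eqref{eq:elem2.C}, where $B_{-1}=0$ forces $C_0\cong H_0$---so that no stray term survives. The one genuine hypothesis is the flatness of $Z_0$, equivalently that $F$ is concentrated in nonnegative homological degrees, which is the situation of interest.
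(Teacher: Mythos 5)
Your proposal is correct and follows essentially the same route as the paper: the same three short exact sequences, the same dimension-shifting isomorphism \eqref{eq:serra0} from Lemma~\ref{lem:serra}, and the same appeal to Lemma~\ref{elem1}, with the paper's ``iterating this procedure'' for \eqref{eq:elem2.B} being exactly your induction on $a$ that alternates between $Z_a$ and $B_{a-1}$. Your explicit remark that the complex must be concentrated in nonnegative degrees (so that $Z_0=F_0$ and the recursion terminates) is a hypothesis the paper leaves implicit, but it does not change the argument.
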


\begin{proof}  
Fix some integer $i\ge1$. In view of \eqref{eq:serra0}, we obtain an exact sequence
  \begin{equation}
  \label{eq:elem2}
\TTor_{i+1}^R(H_a,N) \to  \TTor_i^R(B_a,N) \to   \TTor_{i+1}^R(B_{a-1},N)
  \end{equation}
from the one induced by $0\to B_a\to Z_a\to H_a\to 0$. Now \eqref{eq:elem2} and \eqref{eq:elem1} yield
  \[
\tope(\TTor_i^R(B_a,N))\leq \max\{ \tope(H_a)+t_{i+1}^R(N), \tope(\TTor_{i+1}^R(B_{a-1},N))\}
  \]
The proof of \eqref{eq:elem2.B} is completed by iterating this procedure. 

The isomorphisms \eqref{eq:serra0} show that formula \eqref{eq:elem2.Z} follows from \eqref{eq:elem2.B}.

The exact sequence $0\to H_{a-1}\to C_{a-1}\to B_{a-2}\to 0$ induces an exact sequence
  \[
\TTor_{i}^R(H_{a-1},N) \to  \TTor_i^R(C_{a-1},N) \to   \TTor_i^R(B_{a-2},N)
  \]
from which the inequality \eqref{eq:elem2.C} follows, due to formulas  \eqref{eq:elem1} and \eqref{eq:elem2.B}. 
  \end{proof} 
 
\section{Indecomposable Koszul homology}
\label{KoszulComplexes}

In this section $R$ denotes a commutative ring and $M$ an $R$-module.

\begin{bfchunk}{Koszul complexes.}
  \label{ch:koszul}
Let $E=\bigwedge_{R}E_1$ be the exterior algebra on a free $R$-module $E_1$ sitting 
in homological degree $1$, and $\mu\col E\otimes_{R}E\to E$ the product map.

The algebra $E$ has following universal property:  Each $R$-linear map $E_1\to R$  
extends uniquely to an $R$-linear map $\dd\col E\to E $ of homological degree $-1$, satisfying
\begin{equation}
\label{eq:delta}
\dd\mu=\mu(\dd\otimes E+E\otimes\dd)\,.
\end{equation}
As $\dd^2(E_1)=0$ holds for degree reasons, this implies $\dd^2=0$.  The complex $K$ with underlying 
graded module $E$ and differential~$\dd$ is the \emph{Koszul complex} of $E_1\to R$.

Set $\KRM=K\otimes_RM$. In view of \eqref{eq:delta}, $K$ is a DG algebra and $K^M$ is a DG $K$-module.
Thus, $H(K)$ is a graded algebra and $H(K^M)$ is a graded $H(K)$-module.

If $R$ is a graded $k$-algebra and $E_1\to R$ a homomorphism of graded $R$-modules, see \ref{ch:graded}, 
then the differential algebra $K$ and its differential module $K^M$ are \emph{bigraded}: their differentials 
decrease homological degrees by $1$ and preserve internal degrees.  Thus, $\hh R$ is naturally a bigraded 
algebra and $\hh M$ a bigraded module over it  \end{bfchunk}

\begin{theorem}
\label{thm:epi}
For each pair $(a,b)\in\BZ^2$ there exists a natural $R$-linear map
  \begin{equation}
    \label{eq:epi}
\gamma_{a,b}\col \TTor_1^R(C_{a-1}(K),Z_b(K^M))
\to\frac{H_{a+b}(K^M)}{H_a(K)H_b(K^M)}
  \end{equation}
described in \emph{\ref{ch:prod}}, which is surjective when $\binom{a+b}{a}$ is invertible in $R$.

When $R$ and $M$ are graded $\gamma_{a,b}$ is a homomorphism of graded modules.
 \end{theorem}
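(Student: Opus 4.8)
The plan is to construct the map $\gamma_{a,b}$ explicitly at the level of cycles and then verify surjectivity under the arithmetic hypothesis on $\binom{a+b}{a}$. I would start from the product structure: multiplication in the DG algebra $K$ paired with the DG module structure on $\KRM$ gives a map $Z_a(K)\otimes_R Z_b(\KRM)\to Z_{a+b}(\KRM)$, inducing the product $H_a(K)\otimes_R H_b(\KRM)\to H_{a+b}(\KRM)$ whose image is exactly the denominator $H_a(K)H_b(\KRM)$ in \eqref{eq:epi}. The target of $\gamma_{a,b}$ is the cokernel of this product map. To produce the source, I would apply Lemma~\ref{lem:serra} with the complex $F=K$, the module $N=Z_b(\KRM)$, and index $a$: since each $K_a$ is a finite free $R$-module (hence flat), that lemma provides a four-term exact sequence whose outer terms are $\TTor_1^R(B_{a-1}(K),Z_b(\KRM))$ and $\TTor_1^R(C_{a-1}(K),Z_b(\KRM))$, and identifies $Z_a(K)\otimes_R Z_b(\KRM)$ with a submodule of $Z_a(K\otimes_R Z_b(\KRM))$. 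The second summand, $\TTor_1^R(C_{a-1}(K),Z_b(\KRM))$, is precisely the source of $\gamma_{a,b}$.

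The construction promised in \ref{ch:prod} should then be a zig-zag: an element of $\TTor_1^R(C_{a-1}(K),Z_b(\KRM))$ is represented, via the exact sequence of Lemma~\ref{lem:serra}, by a cycle in $Z_a(K\otimes_R Z_b(\KRM))$ that is \emph{not} in the image of $\phi_a$, i.e.\ not of the form $z\otimes w$ with $z\in Z_a(K)$. Composing the inclusion $Z_b(\KRM)\hookrightarrow \KRM_b$ with the product map realizes such a cycle as a cycle in $Z_{a+b}(\KRM)$, and passing to homology lands in $H_{a+b}(\KRM)$. The key point is that the part coming genuinely from decomposable classes $z\otimes w$ maps into the product $H_a(K)H_b(\KRM)$, so the induced map on the $\TTor_1$ quotient factors through the cokernel $H_{a+b}(\KRM)/H_a(K)H_b(\KRM)$. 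Naturality in both $a$ and $b$ follows because every construction used—the Snake Lemma sequence, the connecting maps, and the DG product—is natural. When $R$ and $M$ are graded, all the maps involved preserve the internal grading, so $\gamma_{a,b}$ is a homomorphism of graded modules, giving the final sentence of the statement essentially for free.

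The heart of the matter, and the step I expect to be the main obstacle, is surjectivity under the hypothesis that $\binom{a+b}{a}$ is invertible. The issue is that a general class in $H_{a+b}(\KRM)$ need not be representable by a product cycle, and to detect whether an arbitrary cycle lifts into the image of $\gamma_{a,b}$ one must control the failure of $\phi_{a+b}$ to be surjective—equivalently, control the "splitting" of Koszul cycles. I would expect the proof to exploit the splitting maps for Koszul cycles from \cite{BCR2}, referenced in the introduction as the technical engine available in almost all characteristics. Concretely, the binomial coefficient $\binom{a+b}{a}$ arises from the combinatorics of shuffles when decomposing an $(a+b)$-form: the natural averaging/contraction operator that splits a cycle of degree $a+b$ into its degree-$a$ and degree-$b$ constituents is a sum over $\binom{a+b}{a}$ shuffle terms, so inverting this coefficient is exactly what makes the splitting map well-defined. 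The argument should show that any class in $H_{a+b}(\KRM)$ modulo $H_a(K)H_b(\KRM)$ is hit by applying such a splitting to a representing cycle, thereby producing a preimage in $\TTor_1^R(C_{a-1}(K),Z_b(\KRM))$.

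Thus the overall strategy is: (i) identify source and target homologically via Lemma~\ref{lem:serra} and the DG product; (ii) define $\gamma_{a,b}$ by the cycle-level zig-zag and check it is well-defined and natural; (iii) establish surjectivity by invoking the characteristic-sensitive splitting maps, tracking where $\binom{a+b}{a}$ must be inverted; and (iv) observe grading is preserved throughout. I anticipate steps (i), (ii), and (iv) to be largely formal, with the entire analytic content concentrated in step (iii).
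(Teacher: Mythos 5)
Your proposal is correct and follows essentially the same route as the paper: the source is identified with $\Coker(\phi_a)$ via Lemma~\ref{lem:serra}, the map $\gamma_{a,b}$ is the multiplication-induced map on partial cycles factored through the cokernel of the homology product (the commutative diagram \eqref{eq:prod}), and surjectivity comes from the splitting maps of \cite{BCR2}, which the paper re-derives as Lemma~\ref{lem:BRS} in the form $\alpha_{a,b}\circ\beta_{a,b}=\binom{a+b}{a}\id$ using the diagonal $\Delta$ of the exterior algebra --- exactly the shuffle-counting mechanism you anticipated as the source of the binomial coefficient.
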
 

The theorem is proved at the end of the section. 

\begin{corollary}
\label{cor:epi}
Assume $\HH aK=0$ for $a\ge1$ (for instance, $E_1$ has a basis $e_1,\dots,e_r$ such that the 
sequence $\dd(e_1),\dots,\dd(e_r)$ is $R$-regular) and set $I=\dd(E_1)$.

If $\binom{a+b}{a}$ is invertible in $R$, then there is a surjective $R$-linear map
 \begin{equation*}
\gamma'_{a,b}\col \TTor_a^R(R/I,Z_b(K^M))\to\TTor_{a+b}^R(R/I,M)
  \end{equation*}
 \end{corollary}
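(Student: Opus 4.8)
The plan is to read off the corollary as a specialization of Theorem~\ref{thm:epi} under the acyclicity hypothesis, after identifying both the source and the target of $\gamma_{a,b}$ with the asserted $\TTor$ modules. I would treat $a\ge1$ throughout, since this is exactly what makes the denominator in \eqref{eq:epi} disappear. The first observation is that the hypothesis $\HH{a}{K}=0$ for $a\ge1$ says precisely that $K$ is a free resolution of $R/I$ over $R$; when $\dd(e_1),\dots,\dd(e_r)$ is $R$-regular this is the classical acyclicity of the Koszul complex. Consequently $\HH{a+b}{K^M}=\TTor_{a+b}^R(R/I,M)$, because $K$ is a resolution and $K^M=K\otimes_RM$, and since $\HH{a}{K}=0$ the subspace $\HH{a}{K}\HH{b}{K^M}$ is zero. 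Thus the target of $\gamma_{a,b}$ in \eqref{eq:epi} is precisely $\TTor_{a+b}^R(R/I,M)$.

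Next I would identify the source. By definition $C_{a-1}(K)=K_{a-1}/B_{a-1}(K)$, and acyclicity gives $Z_j(K)=B_j(K)$ for $j\ge1$; hence for $a\ge2$ the differential $\dd_{a-1}$ induces an isomorphism $C_{a-1}(K)=K_{a-1}/Z_{a-1}(K)\cong B_{a-2}(K)$, the $(a-1)$st syzygy of $R/I$, while for $a=1$ one has $C_0(K)=R/I$ outright. Setting $N=Z_b(K^M)$, it then remains to check $\TTor_1^R(B_{a-2}(K),N)\cong\TTor_a^R(R/I,N)$. This is the standard dimension shift: under acyclicity the isomorphism \eqref{eq:serra0} reads $\TTor_{i+1}^R(B_{j-1}(K),N)\cong\TTor_i^R(B_j(K),N)$ for $i\ge1$ and $j\ge1$, so iterating lowers the syzygy index down to $B_0(K)$, and the exact sequence $0\to B_0(K)\to K_0\to R/I\to0$ with $K_0$ free supplies the final step $\TTor_{a-1}^R(B_0(K),N)\cong\TTor_a^R(R/I,N)$. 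Altogether $\TTor_1^R(C_{a-1}(K),Z_b(K^M))\cong\TTor_a^R(R/I,Z_b(K^M))$ for every $a\ge1$.

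Finally I would define $\gamma'_{a,b}$ as the composite of this isomorphism with $\gamma_{a,b}$ and with the identification of the target obtained in the first paragraph. All the maps involved are natural in the relevant module, so the composite is a well-defined $R$-linear map, and when $\binom{a+b}{a}$ is invertible in $R$ the surjectivity of $\gamma_{a,b}$ furnished by Theorem~\ref{thm:epi} passes to $\gamma'_{a,b}$. The only point requiring care—the ``main obstacle'', such as it is—is the source identification in the second paragraph: one must correctly recognize $C_{a-1}(K)$ as a syzygy of $R/I$ and keep track of the boundary indices, namely the separate treatment of $a=1$ and the base case at $B_0(K)$. The target identification and the transfer of surjectivity are immediate once the denominator is seen to vanish, so the corollary is in essence Theorem~\ref{thm:epi} rewritten in the acyclic case.
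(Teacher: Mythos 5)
Your proposal is correct and takes essentially the same route as the paper: both specialize Theorem~\ref{thm:epi}, identify the target using acyclicity of $K$ (so that $\HH{a+b}{K^M}\cong\TTor_{a+b}^R(R/I,M)$ and the denominator $\HH aK\cdot \HH b{K^M}$ vanishes), and identify the source by the dimension-shift $\TTor_1^R(C_{a-1}(K),Z_b(K^M))\cong\TTor_a^R(R/I,Z_b(K^M))$ coming from the truncated Koszul resolution of $R/I$. The only difference is cosmetic: you carry out the shift stepwise, recognizing $C_{a-1}(K)\cong B_{a-2}(K)$ and iterating \eqref{eq:serra0}, whereas the paper invokes the iterated connecting map along $0\to C_{a-1}(K)\to K_{a-2}\to\cdots\to K_0\to R/I\to0$ in a single stroke.
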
 
 
 \begin{proof}
The hypothesis yields $H_{a+b}(K^M)\cong\TTor_{a+b}^R(R/I,M)$ and an exact sequence
  \[
0\to C_{a-1}(K)\to K_{a-1}\to\cdots\to K_0\to R/I\to0
  \]
 of $R$-modules.  Since $K_i$ is flat for $0\le i\le a-1$, the iterated connecting map 
 $\TTor_a^R(R/I,Z_b(K^M))\to\TTor_1^R(C_{a-1}(K),Z_b(K^M))$ is bijective.
 \end{proof}

As a special case, we obtain an (unexpected) inequality between Betti numbers: 

 \begin{corollary}
\label{specor:epi}
Let  $R$ be a regular local ring, $K$ the Koszul complex on a minimal generating set of the 
maximal ideal of $R$, and $M$ a finitely generated $R$-module.

If $\binom{a+b}{a}$ is invertible in $R$, then the $b$-cycles of  $K^M=K\otimes_RM$ satisfy   
  \[
\beta^R_a(Z_b(K^M))\geq\beta^R_{a+b}(M)\,.
  \]
In the graded setup such inequalities hold for graded Betti numbers. 
 \qed
   \end{corollary}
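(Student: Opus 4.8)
The plan is to deduce Corollary~\ref{specor:epi} directly from Corollary~\ref{cor:epi} by specializing to the situation where $R$ is regular local (or standard graded), $K$ is the Koszul complex on a minimal generating set of the maximal ideal $\mm$, and $M$ is finitely generated. First I would verify that the hypothesis $\HH aK=0$ for $a\ge1$ holds in this setting: since $R$ is regular, a minimal generating set $x_1,\dots,x_r$ of $\mm$ is an $R$-regular sequence, so the parenthetical instance in Corollary~\ref{cor:epi} applies and $I=\dd(E_1)=\mm$. Thus $R/I=k$, and Corollary~\ref{cor:epi} furnishes a surjective $R$-linear map
\[
\gamma'_{a,b}\col\TTor_a^R(k,Z_b(K^M))\tra\TTor_{a+b}^R(k,M)
\]
whenever $\binom{a+b}{a}$ is invertible in $R$.

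The key observation is that surjectivity of $\gamma'_{a,b}$ forces the inequality on dimensions. The Betti number $\beta^R_a(N)$ of a finitely generated module $N$ over the local ring $(R,\mm,k)$ is by definition $\dim_k\TTor_a^R(k,N)$. Applying this with $N=Z_b(K^M)$ on the source and recalling $\beta^R_{a+b}(M)=\dim_k\TTor_{a+b}^R(k,M)$ on the target, a surjection of $k$-vector spaces yields
\[
\beta^R_a(Z_b(K^M))=\dim_k\TTor_a^R(k,Z_b(K^M))\ges\dim_k\TTor_{a+b}^R(k,M)=\beta^R_{a+b}(M)\,,
\]
which is exactly the asserted inequality. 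Here I would note that $Z_b(K^M)$ is finitely generated over the Noetherian ring $R$, being a submodule of the finitely generated free module $K_b\otimes_RM$, so its Betti numbers are finite and the comparison is meaningful.

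For the graded addendum I would simply invoke the last sentence of Theorem~\ref{thm:epi}, which guarantees that $\gamma_{a,b}$---and hence the composite $\gamma'_{a,b}$ built from it through flat connecting maps in Corollary~\ref{cor:epi}---is a homomorphism of graded modules when $R$ and $M$ are graded. A graded surjection is in particular surjective in each internal degree, so the graded Betti numbers $\beta^R_{a,j}(Z_b(K^M))=\dim_k\TTor_a^R(k,Z_b(K^M))_j$ dominate $\beta^R_{a+b,j}(M)$ for every $j$, giving the refined statement. The only subtlety worth flagging is the arithmetic hypothesis: the inequality holds only when $\binom{a+b}{a}$ is a unit, and this restriction is inherited verbatim from Corollary~\ref{cor:epi}; no part of the argument removes it, so I would carry it unchanged into the statement. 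Since the substantive surjectivity is entirely contained in the preceding corollary, there is no genuine obstacle here---the proof is a one-line application, and the statement is correctly punctuated with \texttt{\textbackslash qed} in the excerpt.
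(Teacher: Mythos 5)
Your proof is correct and is exactly the argument the paper intends by ending the statement with \qed: specialize Corollary~\ref{cor:epi} using that a minimal generating set of the maximal ideal of a regular local ring is a regular sequence (so $R/I=k$), then take $k$-dimensions across the surjection $\TTor_a^R(k,Z_b(K^M))\tra\TTor_{a+b}^R(k,M)$, with the graded case following from the last sentence of Theorem~\ref{thm:epi}. Nothing is missing; you have simply written out the details the paper leaves implicit.
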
 
  
\begin{bfchunk}{Diagonal maps.}
  \label{ch:diagonal}
Equipping $E\otimes_{R}E$ with the product 
\[
(x_{1} \otimes x_{2}) \cdot (y_{1} \otimes y_{2}) = (-1)^{|x_{2}||y_{1}|}(x_{1}y_{1}\otimes x_{2}y_{2})\,.
\]
turns it into an $R$-algebra that is strictly graded-commutative for the homological degree.  The universal property of exterior algebras 
yields a unique homomorphism $\Delta\col E\to E\otimes_{R}E$ of graded $R$-algebras, such that 
$\Delta(x)=x\otimes1+1\otimes x$ for $x\in E_1$.

Set $\dd'=\dd\otimes E$ and $\dd''=E\otimes \dd$.   These maps satisfy the equalities
  \begin{gather}
    \label{eq:dd1}
\dd'\Delta=\Delta\dd=\dd''\Delta\\
     \label{eq:dd3}
\dd'\dd''+\dd''\dd'=0
 \end{gather}
Indeed, the maps in \eqref{eq:dd1} and \eqref{eq:dd3} are graded derivations of the $R$-algebras $E$ 
and $E\otimes_RE$, respectively.  They are generated in homological degree $1$, so it suffices to verify 
agreement on $E_{1}$ and $(E\otimes_RE)_1$, respectively.  This is straightforward.
  \end{bfchunk}

\begin{bfchunk}{Partial cycles.}
  \label{ch:partial}
As $E$ is a graded free $R$-module, we have
  \[
E\otimes_RZ(\KRM)=E\otimes_R\Ker(\dd\otimes M)=\Ker(\dd''\otimes M)
  \]
From \eqref{eq:dd3} we obtain $(\dd'\otimes M)(\Ker(\dd''\otimes M))\subseteq\Ker(\dd''\otimes M)$, so 
$\dd'\otimes M$ turns $\Ker(\dd''\otimes M)$ into a complex.  It is equal to $K\otimes_RZ(\KRM)$, so we get
  \begin{equation}
  \begin{aligned}
      \label{eq:dd4}
Z(K\otimes_RZ(\KRM))&=\Ker(\dd'\otimes M)\cap\Ker(\dd''\otimes M)\\
  \end{aligned}
  \end{equation}
as graded submodules of $E\otimes_RE\otimes_RM$.  
For every integer $n$ there is an equality
\begin{equation}
\label{eq:dd5}
Z_{n}(K\otimes_R Z(\KRM)) = \bigoplus_{i+j=n} Z_{i}(K\otimes_RZ_j(\KRM))
\end{equation}
Let $\iota_{i,j}$ and $\pi_{i,j}$ denote the canonical maps from and to $Z_{i}(K\otimes_RZ_j(\KRM))$, respectively,
induced by the decomposition \eqref{eq:dd5}.
  \end{bfchunk}

\begin{bfchunk}{The maps $\alpha$.}
  \label{ch:alpha}
The equalities \eqref{eq:delta} and \eqref{eq:dd3} imply
  \[
(\mu\otimes M)(\Ker(\dd'\otimes M +\dd''\otimes M))\subseteq\Ker(\dd\otimes M)=Z(\KRM)\,.
  \]
In view of \eqref{eq:dd4}, $\mu\otimes M$ restricts to a homomorphism of graded $R$-modules
  \[
\alpha\col Z(K\otimes_RZ(\KRM))\to Z(\KRM)\,.
  \]

Setting $n=a+b$, we define $\alpha_{a,b}$ to be the composed map
  \[
Z_{a}(K\otimes_RZ_b(\KRM))\xra{\iota_{a,b}}Z_{n}(K\otimes_RZ(\KRM))
\xra{\alpha_{n}}Z_{n}(\KRM)
  \]
 \end{bfchunk}

\begin{bfchunk}{The maps $\beta$.} 
The equalities \eqref{eq:dd1} imply  
\[
(\Delta\otimes_{R}M)(\Ker(\dd\otimes M))\subseteq \Ker(\dd'\otimes M)\cap\Ker(\dd''\otimes M)\,.
\]
In view of \eqref{eq:dd4}, $\Delta\otimes_{R}M$ restricts to a homomorphism of graded $R$-modules
 \[
\beta\col Z(K\otimes M)\to Z(K\otimes_RZ(\KRM))\,.
\]

Setting $n=a+b$, we define $\beta_{a,b}$ to be the composed map
  \[
Z_{n}(\KRM)\xra{\beta_{n}}Z_{n}(K\otimes_RZ(\KRM))
\xra{\pi_{a,b}}Z_{a}(K\otimes_RZ_b(\KRM))
  \]
  \end{bfchunk}

The next result is proved by Bruns, Conca, and R\"omer in \cite[2.4]{BCR2} through 
direct calculations.  Our proof mines the graded Hopf algebra structure of $E$.

\begin{lemma}
\label{lem:BRS}
For each pair $(a,b)$ of non-negative integers there is an equality
   \[
\alpha_{a,b}\circ\beta_{a,b}=\binom{a+b}{a}\id^{Z_{a+b}(\KRM)}\,.
  \]
  \end{lemma}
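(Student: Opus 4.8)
The plan is to peel away the module $M$ and reduce the assertion to an identity of $R$-linear endomorphisms of the exterior algebra $E$, and then to prove that identity by exploiting the bigraded Hopf structure of $E$ rather than computing with shuffle signs. First I would unwind the definitions of $\alpha_{a,b}$ and $\beta_{a,b}$. By construction $\beta_{a,b}=\pi_{a,b}\circ\beta_n$ and $\alpha_{a,b}=\alpha_n\circ\iota_{a,b}$ with $n=a+b$, so their composite is $\alpha_n\circ(\iota_{a,b}\pi_{a,b})\circ\beta_n$, where $\iota_{a,b}\pi_{a,b}$ is the projection onto the $(a,b)$-summand of the decomposition \eqref{eq:dd5}. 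The point to record here is that this decomposition of cycles is nothing but the restriction of the bidegree decomposition $E\otimes_RE\otimes_RM=\bigoplus_{i,j}E_i\otimes_RE_j\otimes_RM$: since $\dd'\otimes M$ lowers the first exterior degree while fixing the second, and $\dd''\otimes M$ does the opposite, the description of $Z_n(K\otimes_RZ(\KRM))$ in \eqref{eq:dd4} shows it is the direct sum of its bidegree components, which are exactly the summands $Z_a(K\otimes_RZ_b(\KRM))$. Hence $\iota_{a,b}\pi_{a,b}$ is the restriction of $p_{a,b}\otimes_RM$, where $p_{a,b}\col E\otimes_RE\to E\otimes_RE$ is the projection onto $E_a\otimes_RE_b$, and therefore $\alpha_{a,b}\beta_{a,b}$ is the restriction to $Z_{a+b}(\KRM)$ of $(\mu\circ p_{a,b}\circ\Delta)\otimes_RM$.

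It thus suffices to prove the $M$-free identity
\[
\mu\circ p_{a,b}\circ\Delta=\binom{a+b}{a}\,\id_{E_{a+b}}
\]
in $\operatorname{End}_R(E_{a+b})$; tensoring with $M$ over $R$ and restricting to cycles then delivers the lemma, since $Z_{a+b}(\KRM)\subseteq E_{a+b}\otimes_RM$.

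To prove this reduced identity I would introduce scalars. For a central element $\lambda$ let $h_\lambda\col E\to E$ be the graded $R$-algebra endomorphism with $h_\lambda(x)=\lambda x$ for $x\in E_1$; then $h_\lambda$ acts on $E_p$ as multiplication by $\lambda^p$. The structural inputs are that $\mu$ and $\Delta$ are homomorphisms of graded $R$-algebras for the twisted product of \ref{ch:diagonal} (for $\mu$ this is the graded-commutativity of $E$, for $\Delta$ it is its defining property in \ref{ch:diagonal}), and that $h_\lambda\otimes h_\nu$ is likewise an algebra endomorphism of $E\otimes_RE$ because $h_\lambda$ and $h_\nu$ are degree-preserving, so the Koszul signs in the twisted product are unaffected. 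Consequently $\mu\circ(h_\lambda\otimes h_\nu)\circ\Delta$ is a graded $R$-algebra endomorphism of $E$, and on $E_1$ it sends $x\mapsto\lambda x+\nu x$; being multiplicative, it therefore acts on $E_n$ as multiplication by $(\lambda+\nu)^n$. On the other hand $h_\lambda\otimes h_\nu$ acts on $E_a\otimes_RE_b$ as the scalar $\lambda^a\nu^b$, so on $E_n$ the same endomorphism equals $\sum_{a+b=n}\lambda^a\nu^b\,(\mu\circ p_{a,b}\circ\Delta)$. Comparing this with the binomial expansion of $(\lambda+\nu)^n$ then forces $\mu\circ p_{a,b}\circ\Delta=\binom{n}{a}\id_{E_n}$.

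The one step requiring care—and the main obstacle to a fully rigorous write-up—is the coefficient comparison in the last sentence, which is legitimate only if $\lambda$ and $\nu$ are genuinely independent. I would handle this by carrying out the computation over the polynomial extension $R[\lambda,\nu]$, with $E_1$ replaced by $E_1\otimes_RR[\lambda,\nu]$: there the two expressions for $\mu\circ(h_\lambda\otimes h_\nu)\circ\Delta$ agree as an identity in $\operatorname{End}_{R[\lambda,\nu]}(E_n\otimes_RR[\lambda,\nu])$, and since $R[\lambda,\nu]$ is free over $R$ on the monomials $\lambda^a\nu^b$ while each operator $\mu\circ p_{a,b}\circ\Delta$ is already defined over $R$, equating the coefficient of $\lambda^a\nu^b$ yields the desired identity over $R$. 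The remaining verifications—that $\mu$, $\Delta$, and $h_\lambda\otimes h_\nu$ respect the twisted product, and that the cycle decomposition \eqref{eq:dd5} matches the bidegree decomposition—are routine checks of the sign conventions fixed in \ref{ch:koszul} and \ref{ch:diagonal}.
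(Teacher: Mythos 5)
Your proposal is correct, and although it opens with the same reduction as the paper's proof, the core computation is carried out by a genuinely different argument. Both proofs first peel off $M$ and the passage to cycles, reducing the claim to the identity $\mu_n\circ\rho_{a,b}\circ\Delta_n=\binom{a+b}{a}\id_{E_n}$ on $E_n$ with $n=a+b$, where $\rho_{a,b}$ (your $p_{a,b}$) is the bidegree projection onto $E_a\otimes_RE_b$; your observation that the cycle decomposition \eqref{eq:dd5} is precisely the restriction of the bidegree decomposition of $E\otimes_RE\otimes_RM$, so that $\iota_{a,b}\pi_{a,b}$ restricts $p_{a,b}\otimes_RM$, is exactly what the paper compresses into the phrase that $\alpha_{a,b}\circ\beta_{a,b}$ ``is induced by'' $\delta_{a,b}$. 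The divergence is in proving the reduced identity. The paper computes directly: it expands $\Delta_n(x_{\bsn})$ as a signed sum over subsets $\bsi\subseteq\bsn$, notes that $\rho_{a,b}$ keeps the $\binom{n}{a}$ terms with $\card\bsi=a$, and applies $\mu_n$, the sign $\mathrm{sgn}(\bsi,\bsn\smallsetminus\bsi)$ entering twice and squaring away so that each surviving term equals $x_{\bsn}$. You instead avoid all sign bookkeeping via the convolution trick: $\mu\circ(h_\lambda\otimes h_\nu)\circ\Delta$ is an algebra endomorphism of $E$ (this uses that $\mu$ is an algebra map out of the twisted product, i.e.\ graded-commutativity of $E$, and that degree-preserving endomorphisms respect the Koszul signs), it restricts to multiplication by $\lambda+\nu$ on $E_1$, hence acts as $(\lambda+\nu)^n$ on $E_n$; comparing with the bidegree expansion $\sum_{a+b=n}\lambda^a\nu^b\,(\mu\circ p_{a,b}\circ\Delta)$ over the extension $R[\lambda,\nu]$ extracts the binomial coefficients, the coefficient comparison being legitimate because $E_n$ is defined over $R$ and the monomials $\lambda^a\nu^b$ form an $R$-basis of $R[\lambda,\nu]$. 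Your route is, in a sense, the one that more fully ``mines the graded Hopf algebra structure'' the paper advertises---it is the identity $[\lambda]\star[\nu]=[\lambda+\nu]$ for convolution of scalar operations on a bialgebra generated in degree one---and it generalizes verbatim to any such bialgebra; what it costs is the auxiliary scalar extension, which you correctly flagged as the delicate step and handled properly, whereas the paper's explicit monomial calculation is shorter but specific to the exterior algebra and its shuffle signs.
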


\begin{proof}
Set $n=a+b$ and note that $\alpha_{a,b}\circ\beta_{a,b}$ is induced by the composed map
\begin{equation*}
\delta_{a,b}\col 
E_{n} \xra{\Delta_{n}} 
(E\otimes_{R}E)_{n} \xra{\rho_{a,b}} 
(E\otimes_{R}E)_{n} \xra{\mu_{n}} 
E_{n}
\end{equation*}
where $\rho_{a,b}$ projects $(E\otimes_RE)_n=\bigoplus_{i+j=n}E_{i}\otimes_R E_{j}$ onto its summand $E_{a}\otimes E_{b}$.

Let $n$ be a positive integer and let $\bsn$ denote the sequence $(1,\dots,n)$. For each sequence $\bsi=(u_{1},\dots,u_{i})$ of integers satisfying $1\le u_{1}< \cdots <u_{i}\le n$, write $\bsn\smallsetminus\!\bsi= (v_{1},\dots,v_{n-i})$ with $v_{1}< \cdots <v_{n-i}$ and let $\mathrm{sgn}(\bsi,\bsn\smallsetminus\!\bsi)$ denote the sign of the permutation $(u_{1},\dots,u_{i},v_{1},\dots,v_{n-i})$ of $\bsn$. Given a subset $x=\{x_1, \dots, x_{n}\}$ of $E_1$, setting $x_\bsi = x_{u_1}\wedge \cdots \wedge x_{u_i}\in E_{i}$ we obtain the following equalities:
  \begin{align*}
\delta_{a,b}(x_{\bsn}) 
&= \mu_{n}\circ\rho_{a,b}\bigg(\sum_{\bsi\subseteq\bsn}\mathrm{sgn}(\bsi,\bsn\smallsetminus\!\bsi)\, x_{\bsi}\otimes x_{\bsn\smallsetminus \bsi}\bigg) \\
&= \mu_{n}\bigg(\sum_{\card{\bsi}=a}\mathrm{sgn}(\bsi,\bsn\smallsetminus\!\bsi)\, x_{\bsi}\otimes x_{\bsn\smallsetminus \bsi}\bigg) \\
&= \binom{n}a x_{\bsn}
  \end{align*}
It remains to remark that $E_n$ is additively generated by elements of the form $x_\bsn$.
   \end{proof}

   \begin{bfchunk}{The maps $\gamma$.}
     \label{ch:prod}
Each pair $(a,b)$ of integers defines a diagram of $R$-linear maps
  \begin{equation}
    \label{eq:prod}
      \begin{gathered}
\xymatrixcolsep{1pc}
\xymatrixrowsep{2pc} 
\xymatrix{
& Z_a(K)\otimes Z_b(K^M)
\ar@{->}[rr]^{\chi_{a}\otimes\chi_{b}}
\ar@{->}[dl]_-{\phi_{a}}
\ar@{->}[d]^-{\mu^{Z}_{a,b}}
&& H_a(K)\otimes H_b(K^M)
\ar@{->}[d]^-{\mu^{H}_{a,b}}
  \\
Z_{a}(K\otimes_R Z_b(K^M))
\ar@{->}[r]^-{\alpha_{a,b}}
& Z_{a+b}(K^M)
\ar@{->}[rr]^{\chi_{a+b}}
&& H_{a+b}(K^M)
}
   \end{gathered}
     \end{equation}
where the maps $\chi$ are canonical, $\mu^{Z}_{a,b}$ and $\mu^{H}_{a,b}$ are induced by $\mu$ 
in view of \eqref{eq:delta}, $\alpha_{a,b}$ comes from \ref{ch:alpha}, and $\phi_{a}$ is given by
\ref{ch:phi} with $F=K$ and $N=Z_b(K^M)$.  The definitions show that diagram \eqref{eq:prod}  
commutes, so it yields natural $R$-linear maps 
  \begin{equation}
    \label{eq:epi2}
\Coker(\phi_{a})\to\Coker(\mu^{Z}_{a,b})\to\Coker(\mu^{H}_{a,b})
  \end{equation}
The isomorphism $\TTor_1^R(C_{a-1}(K),Z_b(K^M))\cong\Coker(\phi_{a})$ from Lemma \ref{lem:serra}, 
composed with the maps in \eqref{eq:epi2}, defines the map $\gamma_{a,b}$ in \eqref{eq:epi}.
    \end{bfchunk}

   \begin{proof}[Proof of Theorem \emph{\ref{thm:epi}}]
Since $\chi_{a+b}$ is surjective, so is the second map in \eqref{eq:epi2}.

If $\binom{a+b}{b}$ is invertible in $R$, then $\alpha_{a,b}$ is surjective by Lemma \ref{lem:BRS},
hence so is the first map in \eqref{eq:epi}.  It follows that $\gamma_{a,b}$ is surjective, as desired.

When $R$ is a graded ring and $M$ is a graded $R$-module the constructions of 
 $\alpha_{a,b}$, $\beta_{a,b}$, and $\gamma_{a,b}$ show that they are homomorphisms of graded $R$-modules.
  \end{proof}

 \section{Regularity} 
 \label{mainthm}
 
In this section $k$ is a field and $R$ a graded commutative $k$-algebra with $R_0=k$, $R=k[R_1]$,
and $\rank_kR_1$ finite.  We write $S$ for the symmetric $k$-algebra $S$ on $R_1$ and let $S\to R$ 
be the canonical surjective homomorphism of graded $k$-algebras. 

  \begin{bfchunk}{Regularities.}
    \label{ch:regs}
The $n$th \emph{partial regularity} of an $R$-module $M$ is the number
   \begin{equation}
    \label{eq:regs}
\preg{n}RM=\max_{i\les n}\{t^R_i(M)-i\}\,.
   \end{equation}
Thus, for every $n\in\BZ$ there is an inequality
  \begin{alignat}{2}
    \label{eq:m(R)2}
\preg{n}{}R&\leq\preg{n+1}{}R
  \end{alignat}
and $\sup\{\preg{n}RM\}_{n\in\BZ}$ is the \emph{Castelnuovo-Mumford regularity} $\reg^R(M)$.
  \end{bfchunk}

The goal of this section is to prove the following result.
 
 \begin{theorem} 
 \label{subad}
Let $a$, $b$ be non-negative integers satisfying $a+b\le\pd_SR$.

If $\binom{a+b}{b}$ is invertible in $k$, then for every $R$-module $M$ one has
   \[
t_{a+b}^S(M)\leq 
\max\left\{ 
\begin{aligned}
&t^S_a(R)+ t^S_b(M)
  \\ 
&\preg{a-1}SR+\preg{a+b}RM+a+b
  \\
&\preg{a-1}SR+\preg{b-1}SM+\preg{a+b+1}Rk +a+b+1
\end{aligned}
\right\}
  \]
 \end{theorem}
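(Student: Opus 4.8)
The plan is to translate everything into Koszul homology and then feed the surjection $\gamma_{a,b}$ of Theorem \ref{thm:epi} into the regularity estimates of Section \ref{Flat}. Since the Koszul complex on the variables of $S$ resolves $k$ over $S$ and its tensor with $M$ is $K^M$, one has $\TTor^S_i(k,M)\cong H_i(K^M)$, so $t^S_i(M)=\tope(H_i(K^M))$; in particular $t^S_{a+b}(M)=\tope(H_{a+b}(K^M))$, $t^S_a(R)=\tope(H_a(K))$ and $t^S_b(M)=\tope(H_b(K^M))$. Because $\binom{a+b}{b}=\binom{a+b}{a}$ is invertible in $k$, hence in $R$, the map $\gamma_{a,b}$ is a surjective homomorphism of graded modules, so from $H_a(K)H_b(K^M)\subseteq H_{a+b}(K^M)$ I get
\[
t^S_{a+b}(M)\le \max\{\,\tope(H_a(K)H_b(K^M)),\ \tope(\TTor^R_1(C_{a-1}(K),Z_b(K^M)))\,\}.
\]
The decomposable part is at once bounded by $\tope(H_a(K))+\tope(H_b(K^M))=t^S_a(R)+t^S_b(M)$, the first entry of the maximum; everything now rests on the indecomposable contribution $\TTor^R_1(C_{a-1}(K),Z_b(K^M))$.

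Next I would extract the factor $\preg{a-1}SR+a$. Applying Lemma \ref{elem2}, formula \eqref{eq:elem2.C}, to the free complex $F=K$ with $N=Z_b(K^M)$ and $i=1$ gives
\[
\tope(\TTor^R_1(C_{a-1}(K),Z_b(K^M)))\le \max_{0\le j<a}\{\tope(H_j(K))+t^R_{a-j}(Z_b(K^M))\}.
\]
For $j<a$ one has $\tope(H_j(K))=t^S_j(R)\le \preg{a-1}SR+j$, and writing $\ell=a-j\in\{1,\dots,a\}$ this becomes $\preg{a-1}SR+a+\max_{1\le\ell\le a}\{t^R_\ell(Z_b(K^M))-\ell\}$. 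Thus the problem is reduced to estimating the $R$-syzygies $t^R_\ell(Z_b(K^M))$ of the cycle modules of $K^M$ for $1\le\ell\le a$.

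The heart of the argument is this last estimate, and here I would climb the two families of short exact sequences attached to $K^M$, namely $0\to Z_{i+1}(K^M)\to K^M_{i+1}\to B_i(K^M)\to0$ and $0\to B_i(K^M)\to Z_i(K^M)\to H_i(K^M)\to0$. Two inputs make this tractable: with $r=\rank_kR_1$ one has $K^M_{i+1}\cong M(-i-1)^{\binom{r}{i+1}}$, so $t^R_\ell(K^M_{i+1})=t^R_\ell(M)+i+1$; and $H_i(K^M)$ is a module over $H_0(K)=R/R_+=k$, so Lemma \ref{elem1} (with $N=k$, using the symmetry of $\TTor$) yields $t^R_\ell(H_i(K^M))\le t^S_i(M)+t^R_\ell(k)$. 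Running the long exact sequences in $\TTor^R(k,-)$ through both sequences produces the recursion
\[
t^R_\ell(Z_{i+1}(K^M))\le \max\{\,t^S_i(M)+t^R_{\ell+2}(k),\ t^R_{\ell+1}(Z_i(K^M)),\ t^R_\ell(M)+i+1\,\},
\]
with base case $Z_0(K^M)=M$. Unrolling from $Z_b$ down to $Z_0$ and reindexing the leaves gives
\[
t^R_\ell(Z_b(K^M))\le \max\Big\{\max_{0\le p\le b-1}\{t^S_p(M)+t^R_{\ell+b+1-p}(k)\},\ \max_{0\le s\le b}\{t^R_{\ell+s}(M)+b-s\}\Big\}.
\]
Finally I convert each family to partial regularities via monotonicity \eqref{eq:m(R)2}: in the second family, with $m=\ell+s\le a+b$, $t^R_{\ell+s}(M)+b-s-\ell=(t^R_m(M)-m)+b\le\preg{a+b}RM+b$; in the first family $t^S_p(M)\le\preg{b-1}SM+p$ and $t^R_{\ell+b+1-p}(k)\le\preg{a+b+1}Rk+(\ell+b+1-p)$, so after subtracting $\ell$ the bound is $\preg{b-1}SM+\preg{a+b+1}Rk+b+1$. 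Substituting $\max_{1\le\ell\le a}\{t^R_\ell(Z_b(K^M))-\ell\}$ into $\preg{a-1}SR+a+(\cdots)$ reproduces exactly the second and third entries of the maximum.

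The main obstacle is the third paragraph. Because $M$ is arbitrary, $K^M$ is not a complex of flat modules, so Lemma \ref{elem2} cannot be applied to it directly; one is forced to work with the bare exact sequences of $K^M$ and the non-free modules $K^M_{i+1}\cong M^{\binom{r}{i+1}}$. The delicate point is the index bookkeeping in the recursion—in particular the homological shift by $2$ in the term $t^R_{\ell+2}(k)$—which must be tracked precisely so that the leaves land on $\preg{a+b}RM$, $\preg{b-1}SM$ and $\preg{a+b+1}Rk$ with the correct additive constants.
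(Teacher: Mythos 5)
Your proposal is correct and takes essentially the same route as the paper: the decomposable/indecomposable split via the surjection $\gamma_{a,b}$ of Theorem \ref{thm:epi} combined with \eqref{eq:elem2.C} is exactly the paper's Lemma \ref{serra}, and your recursion for $t^R_\ell(Z_b(K^M))$ from the two families of short exact sequences (with $K^M_{i+1}\cong M(-i-1)^{\binom{r}{i+1}}$ and $R_+$ killing $H_i(K^M)$) is precisely the paper's Lemma \ref{banale}, proved there by the same induction. The final conversion to $\preg{a-1}SR$, $\preg{a+b}RM$, $\preg{b-1}SM$, and $\preg{a+b+1}Rk$ matches the paper's assembly step, so there is no gap.
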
 

We single out a special case of the theorem:

 \begin{corollary} 
  \label{cor:subad}
   \pushQED{\qed} 
If $\preg{a+b+1}Rk=0=\preg{a+b}RM$, then there is an inequality 
  \begin{equation}
    \label{eq:subad}
t_{a+b}^S(M)\leq 
\max\left\{ 
\begin{aligned}
&t_a^S(R)+ t_b^S(M)
  \\
&\preg{a-1}SR+\preg{b-1}SM+a+b+1
\end{aligned}
\right\}
\qedhere
  \end{equation}
  \end{corollary}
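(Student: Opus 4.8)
The plan is to read the corollary off from Theorem~\ref{subad}: under the standing hypotheses inherited from that theorem (that $\binom{a+b}{b}$ be invertible in $k$ and that $a+b\le\pd_SR$), inequality~\eqref{eq:subad} is precisely what Theorem~\ref{subad} produces once the two vanishing conditions are imposed. First I would substitute the hypotheses into the three bounds. The first, $t_a^S(R)+t_b^S(M)$, is untouched. Putting $\preg{a+b}RM=0$ into the second bound $\preg{a-1}SR+\preg{a+b}RM+a+b$ reduces it to $\preg{a-1}SR+a+b$, and putting $\preg{a+b+1}Rk=0$ into the third reduces it to $\preg{a-1}SR+\preg{b-1}SM+a+b+1$. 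Thus Theorem~\ref{subad} already yields
$$t_{a+b}^S(M)\le\max\bigl\{t_a^S(R)+t_b^S(M),\ \preg{a-1}SR+a+b,\ \preg{a-1}SR+\preg{b-1}SM+a+b+1\bigr\}.$$

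The remaining, and only substantive, step is to discard the middle bound by checking that it never exceeds the maximum of the other two; this is a comparison of numbers rather than a homological argument, and is where I expect the (mild) difficulty to sit. The natural comparison is with the third bound, and
$$\preg{a-1}SR+a+b\le\preg{a-1}SR+\preg{b-1}SM+a+b+1\iff\preg{b-1}SM\ge-1.$$
For $b\ge1$ the maximum defining $\preg{b-1}SM=\max_{i\le b-1}\{t_i^S(M)-i\}$ contains the term $t_0^S(M)$, so $\preg{b-1}SM\ge t_0^S(M)$; moreover $t_0^S(M)=t_0^R(M)$ because $S_+M=R_+M$. As $t_0^R(M)$ is the top degree of a minimal generator of $M$, under the usual normalization that $M$ is generated in nonnegative degrees it is $\ge0$, so $\preg{b-1}SM\ge0\ge-1$ and the middle bound is absorbed by the third. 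Discarding it turns the display into~\eqref{eq:subad}.

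I would flag one point for care. Without a degree normalization the middle bound need not be dominated by the third — one can arrange $\preg{b-1}SM<-1$ — and the collapse of the three-term maximum must then be argued as a dichotomy: either $\preg{b-1}SM\ge-1$, or else $t_a^S(R)+t_b^S(M)\ge\preg{a-1}SR+a+b$, so that in both cases the middle bound is at most $\max\{\text{first},\text{third}\}$. The identification $t_0^S(M)=t_0^R(M)$ is what lets the $R$-theoretic hypothesis $\preg{a+b}RM=0$ bear on the $S$-theoretic quantity $\preg{b-1}SM$, and verifying that this three-to-two collapse is always valid is the heart of the otherwise purely formal deduction; no further homology beyond Theorem~\ref{subad} is needed.
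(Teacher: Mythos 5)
Your main line reproduces what the paper intends: the corollary is stated with its end-of-proof sign and no argument, because the authors regard it as pure substitution into Theorem~\ref{subad}, with the specialized middle bound $\preg{a-1}SR+a+b$ silently discarded. Your justification for discarding it is the right one --- it is dominated by the third bound exactly when $\preg{b-1}SM\ge-1$, which holds because $\preg{b-1}SM\ge t_0^S(M)=t_0^R(M)$ whenever $b\ge1$ and $M$ is generated in degrees $\ge-1$ --- and this covers every use of the corollary in the paper, since it is only ever invoked with $M=R$, where $t_0^S(R)=0$.

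However, the fallback dichotomy you float for non-normalized $M$ (``either $\preg{b-1}SM\ge-1$ or the first bound dominates the middle one'') is false, and in fact inequality~\eqref{eq:subad} itself fails for modules generated in sufficiently negative degrees, even under all stated hypotheses. Take $\chr(k)=0$, $S=k[x,y,z,w]$, $R=S/(xy,zw)$ --- a Koszul complete intersection with $\pd_SR=2$ --- and $M=\bigl(R/(f,g)\bigr)(4)$, where $f=x^3+y^3$, $g=z^3+w^3$ form an $R$-regular sequence. The Koszul complex on $f,g$ is the minimal $R$-free resolution of $R/(f,g)$, so $t_0^R(M)=-4$, $t_1^R(M)=-1$, $t_2^R(M)=2$, whence $\preg{2}RM=0$; also $\preg{3}Rk=0$ and $\binom{2}{1}$ is invertible. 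As an $S$-module, $M=\bigl(S/(xy,zw,x^3+y^3,z^3+w^3)\bigr)(4)$ is a shifted complete intersection with generator degrees $2,2,3,3$, so $t_1^S(M)=3-4=-1$ and $t_2^S(M)=6-4=2$. With $a=b=1$ the first bound is $t_1^S(R)+t_1^S(M)=2-1=1$, the third is $\preg{0}SR+\preg{0}SM+3=0-4+3=-1$, while the theorem's middle bound is $\preg{0}SR+\preg{2}RM+2=2=t_2^S(M)$: neither horn of your dichotomy holds, the middle term is attained, and \eqref{eq:subad} would read $2\le1$. So your instinct to flag this point was correct in the strongest sense: the collapse from three bounds to two genuinely requires a normalization hypothesis (say $t_0^R(M)\ge-1$), which should be added to the statement; it cannot be recovered by a cleverer comparison of the terms. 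Since the paper applies the corollary only to $M=R$, nothing downstream of it is affected.
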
 

The proof of Theorem \ref{subad} utilizes Koszul complexes.

   \begin{bfchunk}{Koszul homology.}
     \label{ch:koszulHomology}
The inclusion $R_1(-1)\subseteq R$ defines an $R$-linear map $R\otimes_kR_1(-1)\to R$, and
hence a Koszul complex $K$; see \ref{ch:koszul}.  Set $K^M=K\otimes_RM$.

For each $i\in\BN$ we have an isomorphism 
  \begin{equation}
    \label{eq:koszul0}
H_i(K^M)\cong\TTor^S_i(k,M)
     \end{equation}
of graded $R$-modules; it gives an equality
  \begin{equation}
    \label{eq:koszul1}
t^S_i(M)=\tope(H_i(K^M))
  \end{equation}
As $(R_+)H(K^M)=0$, for all non-negative integers $i,j$ the preceding formula yields
  \begin{equation}
    \label{eq:koszul2}
t^R_{j}(H_{i}(K^M))=t^S_{i}(M)+t^R_{j}(k)
  \end{equation}
    \end{bfchunk}

By applying Theorem \ref{thm:epi}, then formula \eqref{eq:elem2.C} with $F=K$ and $N=Z_b(K^M)$, 
and finally formula \eqref{eq:koszul1} with $M=R$ we obtain:

\begin{lemma}
  \label{serra}
    \pushQED{\qed} 
If $\binom{a+b}{b}$ is invertible in $R$, then there is an inequality
  \begin{equation}
    \label{eq:serra}
\tope\left(\frac{H_{a+b}(K^M)}{H_a(K)H_b(K^M)}\right)
\leq\max_{0\les j<a}\{ t^S_j(R)+t^R_{a-j}(Z_b(K^M)) \}\,.
   \qedhere
  \end{equation}
 \end{lemma}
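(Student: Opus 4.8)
The plan is to read the inequality off directly from the three ingredients named just before the statement, assembling them into a short chain of comparisons of top degrees. The starting point is the surjection furnished by Theorem~\ref{thm:epi}. Since $\binom{a+b}{b}=\binom{a+b}{a}$ is assumed invertible in $R$, that theorem produces a surjective homomorphism of graded $R$-modules
\[
\gamma_{a,b}\col \TTor_1^R(C_{a-1}(K),Z_b(K^M))\tra \frac{H_{a+b}(K^M)}{H_a(K)H_b(K^M)}\,.
\]
A surjection of graded modules cannot raise the top degree, so the first step simply records
\[
\tope\left(\frac{H_{a+b}(K^M)}{H_a(K)H_b(K^M)}\right)\les \tope\big(\TTor_1^R(C_{a-1}(K),Z_b(K^M))\big)\,.
\]

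Next I would estimate the right-hand side by invoking Lemma~\ref{elem2}. The complex $K$ consists of free, hence flat, graded $R$-modules, so formula \eqref{eq:elem2.C} applies with $F=K$, coefficient module $N=Z_b(K^M)$, and $i=1$. Since $a-1+i-j=a-j$ in this case, it yields
\[
\tope\big(\TTor_1^R(C_{a-1}(K),Z_b(K^M))\big)\les \max_{0\les j<a}\{\tope(H_j(K))+t^R_{a-j}(Z_b(K^M))\}\,.
\]

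The final step is to identify the internal top degrees $\tope(H_j(K))$ with the numbers $t^S_j(R)$. Because $K^R=K\otimes_RR=K$, formula \eqref{eq:koszul1} taken with $M=R$ gives $\tope(H_j(K))=t^S_j(R)$ for every $j$; substituting this into the previous display produces exactly the asserted bound \eqref{eq:serra}. At this level there is no genuine obstacle left: all of the substance has been front-loaded into the earlier sections. The one point that must be respected is that the comparison of top degrees is legitimate only because $\gamma_{a,b}$ is a map of \emph{graded} modules—the graded refinement asserted at the end of Theorem~\ref{thm:epi}—and that its surjectivity, on which the whole estimate hinges, is precisely what the hypothesis on $\binom{a+b}{b}$ buys through Lemma~\ref{lem:BRS}.
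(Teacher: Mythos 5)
Your proof is correct and is essentially identical to the paper's, which derives the lemma by exactly this chain: the surjection from Theorem~\ref{thm:epi} (valid since $\binom{a+b}{b}=\binom{a+b}{a}$), then formula \eqref{eq:elem2.C} with $F=K$, $N=Z_b(K^M)$, and $i=1$, and finally \eqref{eq:koszul1} with $M=R$ to identify $\tope(H_j(K))$ with $t^S_j(R)$. Your write-up simply makes explicit the details (gradedness of $\gamma_{a,b}$, flatness of the $K_i$) that the paper leaves implicit.
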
 
 
Next we estimate the partial regularities of the cycles appearing in \eqref{eq:serra}.

   \begin{lemma} 
 \label{banale} 
 For each $R$-module $M$ the following inequality holds: 
 \begin{equation}
   \label{eq:banale}
t^R_a(Z_b(K^M))
 \leq 
 \max_{1\les j\les b}
 \left\{
\begin{aligned}
&t^R_a(M)+b
  \\
&t_{a+j}^R(M)+b-j
  \\
&t_{b-j}^S(M)+t^R_{a+j+1}(k)
 \end{aligned}
 \right\}
 \end{equation}
 \end{lemma}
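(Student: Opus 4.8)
The plan is to prove \eqref{eq:banale} by induction on $b$, fed by two short exact sequences of graded $R$-modules attached to $\KRM$ together with the long exact sequences they induce after applying $\TTor_\bullet^R(k,-)$. Throughout, all maps are homogeneous of degree $0$, so for a sequence $X\to Y\to Z$ that is exact at $Y$ one has $\tope(Y)\le\max\{\tope(X),\tope(Z)\}$; this is the only property of $\tope$ I will use, and it is what converts the long exact sequences into inequalities on the numbers $t^R_i$.

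First I would record, for $b\ge1$, the sequences
\[
0\to Z_b(\KRM)\to \KRM_b\xra{\dd_b} B_{b-1}(\KRM)\to0
\quad\text{and}\quad
0\to B_{b-1}(\KRM)\to Z_{b-1}(\KRM)\to H_{b-1}(\KRM)\to0\,.
\]
From the first, the segment $\TTor_{a+1}^R(k,B_{b-1}(\KRM))\to\TTor_a^R(k,Z_b(\KRM))\to\TTor_a^R(k,\KRM_b)$ gives $t^R_a(Z_b(\KRM))\le\max\{t^R_a(\KRM_b),\,t^R_{a+1}(B_{b-1}(\KRM))\}$. Since $K_b$ is free with all generators in internal degree $b$, the module $\KRM_b=K_b\otimes_RM$ is a direct sum of copies of $M(-b)$, whence $t^R_a(\KRM_b)=t^R_a(M)+b$, the first entry of the target maximum. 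From the second sequence I get $t^R_{a+1}(B_{b-1}(\KRM))\le\max\{t^R_{a+2}(H_{b-1}(\KRM)),\,t^R_{a+1}(Z_{b-1}(\KRM))\}$, and by \eqref{eq:koszul2} the homology term equals $t^S_{b-1}(M)+t^R_{a+2}(k)$. Combining the two estimates yields the recursion
\[
t^R_a(Z_b(\KRM))\le\max\{\,t^R_a(M)+b,\ t^S_{b-1}(M)+t^R_{a+2}(k),\ t^R_{a+1}(Z_{b-1}(\KRM))\,\}\,.
\]

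For the base case $b=1$ I would use $Z_0(\KRM)=\KRM_0=M$, so that $t^R_{a+1}(Z_0(\KRM))=t^R_{a+1}(M)$, and the recursion reproduces exactly the $b=1$ instance of \eqref{eq:banale}. For the inductive step I substitute the hypothesis for $t^R_{a+1}(Z_{b-1}(\KRM))$ into the recursion and reindex the three resulting families by $j\mapsto j+1$. The point is that the shifted second family $t^R_{(a+1)+j}(M)+(b-1)-j$ for $1\le j\le b-1$, together with the constant term $t^R_{a+1}(M)+(b-1)$, reassembles into $\max_{1\le j\le b}\{t^R_{a+j}(M)+b-j\}$, while the shifted third family, together with the freshly produced term $t^S_{b-1}(M)+t^R_{a+2}(k)$ — which is precisely the $j=1$ slot that the shift misses — reassembles into $\max_{1\le j\le b}\{t^S_{b-j}(M)+t^R_{a+j+1}(k)\}$; the term $t^R_a(M)+b$ is left untouched and supplies the first family. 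After this reassembly the right-hand side is exactly the maximum in \eqref{eq:banale}.

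The homological inputs here are all immediate: the two short exact sequences, the $\tope$-estimates read off the long exact sequences, the identification $t^R_a(\KRM_b)=t^R_a(M)+b$, and the formula \eqref{eq:koszul2}. Accordingly, I expect the only real obstacle to be the index bookkeeping in the inductive step: one must arrange the reindexing so that the three families of the hypothesis at $(a+1,b-1)$ align with those of the conclusion at $(a,b)$, and so that the two extra terms produced by the sequences fill in exactly the $j=1$ entries of the first and third families. Once that alignment is set up correctly, the verification is routine.
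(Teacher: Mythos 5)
Your proposal is correct and follows essentially the same route as the paper: the same two short exact sequences $0\to Z_b(\KRM)\to K^M_b\to B_{b-1}(\KRM)\to0$ and $0\to B_{b-1}(\KRM)\to Z_{b-1}(\KRM)\to H_{b-1}(\KRM)\to0$, the same recursion obtained via \eqref{eq:koszul2}, and induction on $b$. The only difference is that you spell out the reindexing in the inductive step, which the paper leaves to the reader.
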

 
 \begin{proof}   
From the exact sequences of $R$-modules 
  \[
\xymatrixcolsep{.8pc}
\xymatrixrowsep{.1pc} 
\xymatrix{
0\ar@{->}[r]
&Z_b(K^M)\ar@{->}[r]
&K_b^M\ar@{->}[r]
&B_{b-1}(K^M)\ar@{->}[r]
&0
  \\
0\ar@{->}[r]
&B_{b-1}(K^M)\ar@{->}[r]
&Z_{b-1}(K^M)\ar@{->}[r]
&H_{b-1}(K^M)\ar@{->}[r]
&0
}
  \]
we obtain the following inequalities:
\begin{align*}
t^R_a(Z_b(K^M))
&\leq \max\{t^R_{a+1}(B_{b-1}(K^M),t^R_a(M)+b)\} 
\\ 
t^R_{a+1}(B_{b-1}(K^M))
&\leq   \max\{t^R_{a+2}(H_{b-1}(K^M)), t^R_{a+1}(Z_{b-1}(K^M)) \}.
\end{align*}
By concatenating these relations and invoking \eqref{eq:koszul2} we get
 \[
t^R_a(Z_b(K^M))
 \leq 
 \max\left\{
\begin{aligned}
&t^R_a(M)+b
  \\
&t^R_{a+1}(Z_{b-1}(K^M))
  \\
&t_{b-1}^S(M)+t^R_{a+2}(k)
  \end{aligned}
 \right\}
   \]

The desired result follows from the preceding formula by induction on $b$.  
 \end{proof}
 
  \begin{proof}[Proof of Theorem \emph{\ref{subad}}]
Products in homology yield an exact sequence 
  \[
H_a(K)\otimes H_b(K^M)\to H_{a+b}(K^M)\to\frac{H_{a+b}(K^M)}{H_a(K)H_b(K^M)}\to0
  \]
of graded $k$-vector spaces.  It accounts for the first inequality in the following display, where the equality comes from \eqref{eq:koszul1}, 
the second inequality from \eqref{eq:serra}, and the last inequality from \eqref{eq:banale}:
  \begin{align*}
t^S_{a+b}(M)
&=\tope(H_{a+b}(K^M))\\
&\leq \max\left\{\tope(H_{a}(K))+\tope(H_{b}(K^M)), \tope\left(\frac{H_{a+b}(K^M)}{H_a(K)H_b(K^M)}\right) \right\}
  \\
&\leq\max_{0\les i<a}\{t^S_a(R)+ t^S_b(M), t^S_i(R)+t^R_{a-i}(Z_b(K^M))\}
  \\
&\leq \max_{\substack{0\les i<a\\ 1\les j\les b}}
\left\{
\begin{aligned}
&t^S_a(R)+ t^S_b(M)
  \\
&t^S_i(R)+t^R_{a-i}(M)+b
  \\
&t^S_i(R)+t_{a-i+j}^R(M)+b-j 
  \\
&t^S_i(R)+t_{b-j}^S(M)+t^R_{a-i+j+1}(k)
 \end{aligned}
 \right\}
   \end{align*}
For each pair $(i,j)$ with $0\le i<a$ and $0\le j\le b$, in view of \eqref{eq:regs} we have
\begin{align*}
t^S_i(R)&+t_{a-i+j}^R(M)+b-j\\
&\leq [\preg{a-1}SR+i]+[\preg{a+b}RM+a-i+j]+b-j\\
&=\preg{a-1}SR+\preg{a+b}RM+a+b
\end{align*}
Similarly, for each pair $(i,j)$ with $0\le i<a$ and $1\le j\le b$, we obtain
  \begin{align*}
t^S_i(R)&+t_{b-j}^S(M)+t^R_{a-i+j+1}(k)\\
 &\leq [\preg{a-1}SR+i]+[\preg{b-1}SM+b-j]+[\preg{a+b+1}Rk+a-i+j+1]\\
  &= \preg{a-1}SR+\preg{b-1}SM+\preg{a+b+1}Rk+a+b+1
\end{align*}

It remains to assemble the inequalities in the last three displays.
  \end{proof} 
 
 \section{Decomposable Koszul homology}
\label{Decomposable}
We keep the hypotheses on $R$ and $S$ from Section \ref{mainthm}, and let $J$ denote the kernel 
of the surjection $S\to R$ of graded $k$-algebras.

\begin{theorem} 
  \label{thm:aci1}
Let $K$ be the Koszul complex described in \emph{\ref{ch:koszulHomology}}. 

If $\preg{n+1}Rk=0$ for some $n\ge0$, then for $0\le i\le n$ there are equalities
  \begin{align}
    \label{eq:aci11}
\HH iK_j&=0 \quad\text{when}\quad j>2i\,.
  \\
    \label{eq:aci12}
\HH iK_{2i}&=(\HH 1K_{2})^i\,.
  \end{align}
   \end{theorem}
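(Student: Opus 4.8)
The plan is to read off both equalities from the structure of the bigraded Koszul homology algebra $\HH{}{K}$, which by \eqref{eq:koszul0} (taken with $M=R$) computes $\TTor_i^S(k,R)$ in homological degree $i$. The hypothesis $\preg{n+1}Rk=0$ says precisely that $\TTor_p^R(k,k)$ is concentrated in internal degree $p$ for every $p$ with $0\le p\le n+1$, since one always has $t_p^R(k)\ge p$. The engine of the proof will be the multiplicative Cartan--Eilenberg change-of-rings spectral sequence attached to $S\to R\to k$: choosing a DG-algebra resolution $G$ of $k$ over $R$ (an acyclic closure serves) and forming the DG algebra $K^S\otimes_S G$, where $K^S$ is the Koszul complex resolving $k$ over $S$, I obtain an internally graded spectral sequence
\[
E^2_{p,q}=\TTor_p^R(k,k)\otimes_k\HH{q}{K}\;\Longrightarrow\;\TTor_{p+q}^S(k,k)=\textstyle\bigwedge^{p+q}_k R_1\,.
\]
Since $S$ is a polynomial ring the abutment is concentrated on the diagonal, i.e.\ $\TTor_m^S(k,k)$ lives in internal degree $m$ alone; this is the one external fact the argument exploits.

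I would prove \eqref{eq:aci11} and \eqref{eq:aci12} together by induction on $i$, working along the edge $p=0$, where $E^2_{0,i}=\HH{i}{K}$ and only incoming differentials $d^r\colon E^r_{r,i-r+1}\to E^r_{0,i}$ (with $2\le r\le i+1\le n+1$) can act. For \eqref{eq:aci11}, fix an internal degree $w>2i$; then $w\ne i$, so $E^\infty_{0,i}$ vanishes in degree $w$ and $\HH{i}{K}_w$ must lie in the sum of the images of these $d^r$. In internal degree $w$ the source $E^2_{r,i-r+1}$ equals $\TTor_r^R(k,k)_r\otimes\HH{i-r+1}{K}_{w-r}$, because $\TTor_r^R(k,k)$ is concentrated in degree $r$; by the inductive form of \eqref{eq:aci11} for $\HH{i-r+1}{K}$ this is nonzero only if $w-r\le 2(i-r+1)$, i.e.\ $w\le 2i-r+2\le 2i$, contradicting $w>2i$. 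Hence every source vanishes and $\HH{i}{K}_w=0$.

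The same bookkeeping in internal degree exactly $2i$ kills the sources with $r\ge 3$, so the vanishing of $E^\infty_{0,i}$ in degree $2i$ forces $d^2\colon E^2_{2,i-1}\to E^2_{0,i}$ to be surjective onto $\HH{i}{K}_{2i}$. Multiplicativity now does the essential work: in degree $2i$ the source is $\TTor_2^R(k,k)_2\otimes\HH{i-1}{K}_{2i-2}$, which is the $E^2$-product of $E^2_{2,0}$ and $E^2_{0,i-1}$, and since $d^2$ is a derivation vanishing on $E^2_{0,i-1}$ it acts by $x\otimes y\mapsto d^2(x)\cdot y$, with $d^2(x)\in E^2_{0,1}=\HH{1}{K}$ and the product taken in $\HH{}{K}$. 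The five-term exact sequence of the spectral sequence identifies the edge differential $d^2\colon E^2_{2,0}\to E^2_{0,1}$ with the natural (surjective) map $\TTor_2^R(k,k)\to\HH{1}{K}$; in internal degree $2$ this gives $\TTor_2^R(k,k)_2\tra\HH{1}{K}_2$, where $\HH{1}{K}=\HH{1}{K}_2$ by the case $i=1$ of \eqref{eq:aci11}. Therefore $\HH{i}{K}_{2i}=\HH{1}{K}_2\cdot\HH{i-1}{K}_{2i-2}$, and the inductive form of \eqref{eq:aci12} yields $\HH{i}{K}_{2i}=(\HH{1}{K}_2)^i$.

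I expect the main obstacle to be making the multiplicative apparatus fully precise rather than the numerics, which reduce to the degree bookkeeping above. Concretely, one must pin down a DG-algebra model $G$ so that $K^S\otimes_S G$ is a genuine DG algebra, check that the product induced on $E^2$ is the evident tensor product of the two edge algebras $\TTor^R(k,k)$ and $\HH{}{K}$, and verify both the Leibniz rule for $d^2$ and the identification of the transgression $E^2_{2,0}\to E^2_{0,1}$ with $\TTor_2^R(k,k)\to\HH{1}{K}$. These compatibilities are exactly what converts the numerical ceiling \eqref{eq:aci11} into the structural statement \eqref{eq:aci12} that the top-degree homology is decomposable into products of quadratic classes.
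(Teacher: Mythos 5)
Your argument is correct, but it takes a genuinely different route from the paper's. The paper's proof contains no spectral sequence: it chooses a minimal model $S[X]$ of $R$ over $S$, uses the bigraded isomorphism \eqref{eq:model3}, $\Tor{}Rkk\cong\bigotimes_i k\langle X'_i\rangle$ from \cite[7.2.6]{Av:barca}, to translate the hypothesis $\preg{n+1}Rk=0$ into the combinatorial statement that every model generator $x\in X$ with $|x|\le n$ satisfies $\deg(x)=|x|+1$, and then a one-line monomial count gives $k[X]_{i,j}=0$ for $j>2i$ and $k[X]_{i,2i}=(kX_{1,2})^i$; both \eqref{eq:aci11} and \eqref{eq:aci12} then follow at once from the algebra isomorphism \eqref{eq:aci13}, with no induction and with the vanishing obtained already at the chain level rather than in homology. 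You instead run the multiplicative change-of-rings spectral sequence $E^2_{p,q}=\TTor_p^R(k,k)\otimes_k\HH qK\Rightarrow\TTor_{p+q}^S(k,k)$ built from $K\otimes_RG$ for an acyclic closure $G$, and induct on $i$, using that the abutment $\bigwedge R_1$ sits on the diagonal and that $d^2$ is a derivation vanishing on the edge $E^2_{0,*}$. Your bookkeeping is right: for $r\ge 2$ the source $E^2_{r,i-r+1}$ dies in internal degrees $>2i-r+2$, which kills everything above degree $2i$ and isolates $d^2$ in degree exactly $2i$; the base case is sound since $\HH 0K=k$. Two remarks. First, the identification of the transgression $E^2_{2,0}\to E^2_{0,1}$ with the natural map $\TTor_2^R(k,k)\to\HH 1K$ is true but not actually needed: surjectivity of $d^2$ onto $\HH iK_{2i}$ comes from convergence, the Leibniz rule places its image inside $\HH 1K_{2}\cdot\HH{i-1}K_{2i-2}$, and the reverse inclusion is trivial. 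Second, the compatibilities you defer (multiplicativity of the filtration on $K\otimes_RG$, the tensor-product algebra structure on $E^2$, the homology product on the edge) are genuine obligations but classical, and they play the same structural role in your proof that the acyclic-closure theorem \cite[7.2.6]{Av:barca} plays in the paper's. The trade-off: the paper's route yields the stronger chain-level vanishing and both equalities simultaneously without induction, while yours avoids the structure theorem for $\Tor{}Rkk$ entirely, needing only the concentration hypothesis, at the price of the spectral-sequence apparatus.
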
 

Formula \eqref{eq:koszul1} gives a numerical translation of the equality \eqref{eq:aci11}.

\begin{corollary} 
  \label{cor:protoKoszul}
   \pushQED{\qed} 
For $0\le i\le n$ the following inequalities hold:
  \begin{equation}
    \label{eq:kb}
t^S_i(R)\le 2i \,.
   \qedhere
  \end{equation}
   \end{corollary}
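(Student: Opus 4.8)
The plan is to prove Theorem~\ref{thm:aci1}; Corollary~\ref{cor:protoKoszul} is then immediate, since \eqref{eq:koszul1} converts the vanishing \eqref{eq:aci11} into $t^S_i(R)=\tope(\HH iK)\le 2i$, which is exactly \eqref{eq:kb}. The main tool I would use is the change-of-rings spectral sequence attached to the surjection $S\to R$ and the $S$-module $k$, arising from the derived identification $k\otimes^{\mathbf L}_Sk\simeq k\otimes^{\mathbf L}_R(R\otimes^{\mathbf L}_Sk)$:
\[
E^2_{p,q}=\TTor^R_p(k,\TTor^S_q(R,k))\Longrightarrow\TTor^S_{p+q}(k,k)\,.
\]
Using \eqref{eq:koszul0} with $M=R$ I identify $\TTor^S_q(R,k)=\HH qK$, and since $S$ is a polynomial ring $\TTor^S_{p+q}(k,k)=\bigwedge^{p+q}R_1$ lies in internal degree exactly $p+q$. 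Because $R_+$ annihilates $\hh K$, each $\HH qK$ is a trivial $R$-module, so $E^2_{p,q}=\TTor^R_p(k,k)\otimes_k\HH qK$ with internal degrees adding. The hypothesis $\preg{n+1}Rk=0$ means $t^R_p(k)\le p$ for $p\le n+1$, and together with the standard bound $\TTor^R_p(k,k)_j=0$ for $j<p$ this forces $\TTor^R_p(k,k)$ to be concentrated in internal degree exactly $p$ for all $p\le n+1$. Two facts then drive everything: in the range $p\le n+1$ the first tensor factor contributes internal degree precisely $p$; and each $E^\infty_{p,q}$ is a subquotient of $\bigwedge^{p+q}R_1$, hence is concentrated in internal degree $p+q$.

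For \eqref{eq:aci11} I would argue by minimal counterexample. Call $(q,s)$ \emph{bad} if $1\le q\le n$, $s>2q$, and $\HH qK_s\ne0$; assume a bad pair exists and choose one with $q$ least. Its class lives in $E^2_{0,q}=\HH qK$ in internal degree $s$. No differential leaves the column $p=0$, so the class can be destroyed only by an incoming $d_r\colon E^r_{r,q-r+1}\to E^r_{0,q}$ with $2\le r\le q+1$. As $q\le n$ we have $r\le n+1$, so the internal-degree-$s$ part of the source is a subquotient of $\TTor^R_r(k,k)_r\otimes_k\HH{q-r+1}K_{s-r}$; here $(s-r)-2(q-r+1)=(s-2q)+(r-2)\ge1$, so $(q-r+1,s-r)$ would be a bad pair of smaller homological degree, and minimality forces $\HH{q-r+1}K_{s-r}=0$. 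Thus every incoming differential vanishes, the class survives to $E^\infty_{0,q}$ in internal degree $s$, and this contradicts the concentration of $E^\infty_{0,q}$ in internal degree $q$ (note $s>2q\ge q+1>q$). Hence no bad pair exists, which is \eqref{eq:aci11}.

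For \eqref{eq:aci12} I would track the top corner $\HH iK_{2i}$ with $i\le n$, sitting in $E^2_{0,i}$ in internal degree $2i$. Granting \eqref{eq:aci11}, the only incoming differential that can be nonzero there is $d_2\colon\TTor^R_2(k,k)_2\otimes_k\HH{i-1}K_{2i-2}\to\HH iK_{2i}$: for $r\ge3$ the source would require $\HH{i-r+1}K_{2i-r}$ with $2i-r>2(i-r+1)$, which vanishes. Concentration of $E^\infty_{0,i}$ in internal degree $i\ne 2i$ then forces this $d_2$ to be surjective. Now one uses that the spectral sequence is multiplicative, with $E^2=\TTor^R(k,k)\otimes_k\hh K$ as a bigraded algebra and $d_2$ a derivation that vanishes on the subalgebra $E^2_{0,\bullet}=\hh K$; the low-degree five-term sequence (with $\TTor^S_1(k,k)_2=0$) identifies the edge differential $\TTor^R_2(k,k)_2\to\HH1K_2$ as a surjection. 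The Leibniz rule gives $d_2(\tau\otimes h)=d_2(\tau)\cdot h$, so the image of the surjective $d_2$ is $\HH1K_2\cdot\HH{i-1}K_{2i-2}$; with the inductive hypothesis $\HH{i-1}K_{2i-2}=(\HH1K_2)^{i-1}$ (base case $i\le1$ trivial) this yields $\HH iK_{2i}=(\HH1K_2)^i$, proving \eqref{eq:aci12}.

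The main obstacle is the multiplicative input in the last step: one must justify that this change-of-rings spectral sequence is a spectral sequence of algebras whose $E^2$ page is the tensor-product algebra $\TTor^R(k,k)\otimes_k\hh K$, that $d_2$ is a derivation restricting trivially to $\hh K$, and that the edge differential $\TTor^R_2(k,k)\to\HH1K$ is the expected surjection. Establishing this structure carefully—say through a DG-algebra or bar-resolution model for the composite functor, so that the product on $\hh K$ matches the Koszul homology product—is the delicate part; by contrast, the vanishing \eqref{eq:aci11}, and hence Corollary~\ref{cor:protoKoszul}, needs only the additive structure and the degree bookkeeping above.
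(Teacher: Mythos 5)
Your proposal is correct, and for the statement at hand it is complete, but it reaches Theorem \ref{thm:aci1} by a genuinely different route than the paper. The paper deduces Corollary \ref{cor:protoKoszul} from Theorem \ref{thm:aci1} exactly as you do, via \eqref{eq:koszul1}; but it proves the theorem with minimal models: for a minimal model $S[X]$ of $R$ it identifies $\hh{k[X]}\cong\Tor{}SkR\cong\hh K$ as bigraded algebras \eqref{eq:aci13}, invokes the structure theorem \eqref{eq:model3} from \cite[7.2.6]{Av:barca} to translate $\preg{n+1}Rk=0$ into the statement that every variable $x\in X$ with $|x|\le n$ has $\deg(x)=|x|+1$, and then a monomial degree count in $k[X]$ yields \eqref{eq:aci11} and \eqref{eq:aci12} simultaneously. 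You instead run the graded change-of-rings spectral sequence $\TTor^R_p(k,\TTor^S_q(R,k))\Rightarrow\TTor^S_{p+q}(k,k)$, encoding the same hypothesis as concentration of $\TTor^R_p(k,k)$ in internal degree $p$ for $p\le n+1$, played against concentration of the abutment $\TTor^S_{p+q}(k,k)\cong\bigwedge^{p+q}R_1$ in degree $p+q$. Your minimal-counterexample induction for \eqref{eq:aci11} is sound and uses only the additive graded structure of the spectral sequence, which is standard Cartan--Eilenberg material; since the Corollary needs only \eqref{eq:aci11}, your proof of it is complete. (One small point to patch: for the incoming differential with $r=q+1$ the source involves $\HH 0K\cong k$, which is not a ``bad pair'' under your definition, but it vanishes in the positive internal degree $s-r$, so the induction is unharmed.) The trade-off is clear: your route proves the vanishing---and hence the Corollary---with less machinery, avoiding minimal-model structure theory entirely; the paper's route yields the multiplicative statement \eqref{eq:aci12} at no extra cost, because the minimal model is a DG algebra and \eqref{eq:aci13} is an isomorphism of algebras, whereas in your approach \eqref{eq:aci12} hinges on equipping the spectral sequence with a multiplicative structure (algebra $E^2$-page, Leibniz rule, identification of the edge map $\TTor^R_2(k,k)_2\tra \HH 1K_2$), and the natural way to construct such a multiplicative spectral sequence is from a DG algebra resolution of $R$ over $S$---which essentially re-creates the paper's minimal model. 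Since \eqref{eq:aci12} is needed later, in the proof of Theorem \ref{lowreg}, the paper's uniform treatment is the more economical one for its purposes, but as a proof of Corollary \ref{cor:protoKoszul} alone your argument stands.
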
 
 
When $R$ is Koszul, that is to say, when $\reg^R(k)=0$, these results hold for all integers $i\ge0$.  This is known:  \eqref{eq:kb} 
is proved by Kempf \cite[Lemma 4]{Ke} and is a special case of a result of Backelin \cite[Corollary]{Ba}; see also 
\cite[4.1(a) and 1.4]{ACI}.  On the other hand, \eqref{eq:aci12} is a special case of \cite[4.1(a) and (3.8.3)]{ACI}.

The proof of Theorem \ref{thm:aci1} does not use the material developed in previous sections; rather, it adapts ideas from our proof of \cite[3.1]{ACI}.  We start by 
presenting the relevant background material.

  \begin{bfchunk}{Minimal models.}
Given a set $X=\bigsqcup_{j\ges i\ges1}X_{i,j}$, let $k[X]$ denote the $k$-algebra $\bigotimes_{i=1}^{\infty}k[X_i]$, where $X_i=\bigsqcup_jX_{i,j}$ and $k[X_{i}]$ is the 
exterior algebra on $X_{i}$ when $i$ is odd, respectively, the symmetric algebra on $X_{i}$ when $i$ is even.  

For $x\in X_{i,j}$ set $|x|=i$ and $\deg(x)=j$.  A $k$-basis of $k[X]$ is given by the element $1\in k$ and the monomials $x_1^{d_1}\cdots x_u^{d_u}$ with $x_l\in X$ and 
$d_l=1$ when $|x_{l}|$ is odd, respectively, with $d_l\ge1$ when $|x_{l}|$ is even. We bigrade $k[X]$ by setting
  \[
\big|x_1^{d_1}\cdots x_u^{d_u}\big|=\sum_{l=1}^u d_l|x_l|
\quad\text{and}\quad
\deg\big(x_1^{d_1}\cdots x_u^{d_u}\big)=\sum_{l=1}^u d_l\deg(x_l)\,.
  \]
We also bigrade $S$ by $|y|=0$ and $\deg(y)=i$ for $y\in S_i$.

A \emph{model} of $R$ is a differential bigraded $S$-algebra $S[X]$ with underlying algebra $S\otimes_kk[X]$ and differential satisfying $|\dd(y)|=|y|-1$,
$\deg(\dd(y))=\deg(y)$, and
  \[
\dd(s\otimes yy')=s\otimes \dd(y)y'+(-1)^{|y|}s\otimes y\dd(y')\,,
  \]
together with an $S$-linear map $\varepsilon$ that makes the following sequence exact:
  \begin{equation}
    \label{eq:model1}
\cdots \to S[X]_{i,*}\xra{\,\dd_i\,} S[X]_{i-1,*}\to\cdots\to S[X]_{0,*}\xra{\,\varepsilon\,}R\to0
  \end{equation}

A model $S[X]$ is said to be \emph{minimal} if it satisfies $\dd(S[X])\subseteq (S_{+}+X^2)S[X]$.
Such a model always exists, and any two are isomorphic as differential 
bigraded $S$-algebras; see \cite[7.2.4]{Av:barca}.  In every minimal model $\dd(X_1)$ 
is a minimal set of generators of $J$ and $S[X_1]$ is the Koszul 
complex of the restriction $SX_1\to S$ of $\dd_1$.

Let $X_{0,0}$ be a $k$-basis of $S_1$, and define a set $X'=\bigsqcup_{j\ges i\ges0}X'_{i,j}$ 
by the formula
  \begin{equation}
    \label{eq:model2}
X'_{i,j}=\{x'\}_{x\in X_{i-1,j}}\,.
  \end{equation}
Set $k\langle X'\rangle=\bigotimes_{i=1}^{\infty}k\langle X'_i\rangle$, where $X'_i=\bigsqcup_jX_{i,j}$ and $k\langle X'_i\rangle$ denotes the exterior algebra on $X'_i$ when $i$ is odd, respectively, the divided powers algebra on that space when $i$ is even.  

A $k$-basis of $k\langle X'\rangle$ is given by $1\in k$ and ${x'}_1^{(d_1)}\cdots {x'}_u^{(d_u)}$ with $x'_l\in X'$ and $d_l=1$ if $|x'_{l}|$ is odd, respectively, with $d_l\ge1$ if $|x'_{l}|$ is even. It is bigraded by setting
  \[
\big|{x'}_1^{(d_1)}\cdots {x'}_u^{(d_u)}\big|=\sum_{l=1}^u d_l|x'_l|
\quad\text{and}\quad
\deg\big({x'}_1^{(d_1)}\cdots {x'}_u^{(d_u)}\big)=\sum_{l=1}^u d_l\deg(x'_l)\,.
  \]

By \cite[7.2.6]{Av:barca}, there exists an isomorphism of bigraded $k$-vector spaces
  \begin{equation}
    \label{eq:model3}
\Tor{}Rkk\cong\bigotimes_{i=1}^{\infty}k\langle X'_i\rangle
  \end{equation}
    \end{bfchunk}

  \stepcounter{theorem}
\begin{proof}[Proof of Theorem \emph{\ref{thm:aci1}}]
Let $S[X]$ be a minimal model of $R$.  Set \text{$k[X]=k\otimes_SS[X]$} and note that
\eqref{eq:koszul0} and \eqref{eq:model1} yield isomorphisms of bigraded $k$-algebras 
  \begin{equation}
    \label{eq:aci13}
\hh{k[X]}\cong\Tor{}SkR\cong\HH{}{K}\,.
  \end{equation}
This induces an isomorphism $kX_{1,2}\cong \HH 1K$, by the minimality of $S[X]$.

Let $x'$ be the element of $X'_{i,j}$ corresponding to the element $x$ of $X_{i-1,j}$; see~\eqref{eq:model2}.  For $i+1\le n+1$, from $\preg{n+1}Rk=0$ and \eqref{eq:model3} we get $j=i+1$.  Thus, $|x|\le n$ implies $\deg(x)=|x|+1$. Since $|x|\ge1$ holds for $i\le n$, for any monomial $x_1^{d_1}\cdots x_u^{d_u}$ with $|x_1^{d_1}\cdots x_u^{d_u}|\le n$ we obtain the relations
  \begin{align*}
\deg\big(x_1^{d_1}\cdots x_u^{d_u}\big)
&=\sum_{l=1}^ud_l\deg(x_l)
=\sum_{l=1}^ud_l(|x_l|+1)
=\big|x_1^{d_1}\cdots x_u^{d_u}\big|+\sum_{l=1}^ud_l \\
&\le2\big|x_1^{d_1}\cdots x_u^{d_u}\big|
  \end{align*}

For $j>2i$ these relations yield $(k[X])_{i,j}=0$.  As a consequence, $\HH i{k[X]}_{j}=0$ holds for $j>2i$ and 
$\HH i{k[X]}_{2i}$ is a quotient of $(k[X])_{i,2i}$.  In view of \eqref{eq:aci13}, now \eqref{eq:aci11}
is proved, and for \eqref{eq:aci12} it remains to show $(k[X])_{i,2i}=(kX_{1,2})^i$.

When $\deg\big(x_1^{d_1}\cdots x_u^{d_u}\big)=2\big|x_1^{d_1}\cdots x_u^{d_u}\big|$ the inequality in 
the last display becomes an equality.  All $d_l$ and $|x_l|$ being positive integers, for $1\le l\le u$ we get 
first $|x_l|=1$, then $|x_l|=\deg(x_l)-|x_l|$; that is, $\deg(x_l)=2$.  As a consequence, we get
$k[X]_{i,2i}=k[X_1]_{i,2i}=(kX_{1,2})^i$, as desired.
  \end{proof}

 \section{Subadditivity of syzygies}
 \label{applications}

We keep the hypotheses on $R$ and $S$ from Section \ref{mainthm}.  In this section we pick
up a theme from the introduction---\emph{linearity} at the start of the resolution of $k$ over 
$R$ implies \emph{subadditivity} of the degree of the syzygies of $R$ over~$S$.  We set
 \[
t_i(R)=t^S_i(R)\,,
\quad
\reg(R)=\reg^S(R) \,,
\quad\text{and}\quad
\reg_i(R)=\reg_i^S(R)
 \]
for $i\in\BZ$.  Some of our results depend on the number
  \[
m(R)=\min\{i\in\BZ\mid t_{i}(R)\ge t_{i+1}(R)\}\,.
  \]

  \begin{lemma}
    \label{lem:m(R)}
The following inequalities hold: 
  \begin{alignat}{2}
    \label{eq:m(R)1}
\rank_kR_1 - \dim R&\le m(R)\le \pd_SR\,. 
  \end{alignat}
    \end{lemma}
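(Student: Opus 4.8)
The plan is to prove the two inequalities in \eqref{eq:m(R)1} separately, reading off both from the definition of $m(R)$ as the first index at which the sequence $t_i(R)$ stops strictly increasing. First I would dispose of the upper bound. For $i>\pd_SR$ we have $\TTor^S_i(R,k)=0$, hence $t_i(R)=-\infty$; in particular $t_{\pd_SR+1}(R)=-\infty\le t_{\pd_SR}(R)$, so the defining condition $t_i(R)\ge t_{i+1}(R)$ is satisfied at $i=\pd_SR$, which forces $m(R)\le\pd_SR$. (One should note that $m(R)$ is well defined precisely because this condition holds for at least one $i$.)

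The lower bound $\rank_kR_1-\dim R\le m(R)$ is the substantive half, and I expect it to be the main obstacle. By the definition of $m(R)$, every index $i$ with $0\le i<m(R)$ satisfies $t_i(R)<t_{i+1}(R)$, i.e.\ the strict inequality $t_{i+1}(R)\ge t_i(R)+1$ holds on that range. The natural strategy is to turn this chain of strict increases into a statement about how far the resolution must extend, via the codimension. Concretely, set $c=\rank_kR_1-\dim R$; since $S$ is a polynomial ring on $\rank_kR_1$ variables and $R=S/J$, the number $c$ is exactly the codimension (height) of $J$, which by the Auslander--Buchsbaum formula and standard depth estimates is a lower bound for $\pd_SR$. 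The goal is to show that the strict increases cannot all be exhausted before index $c$; equivalently, that $t_i(R)$ must keep strictly increasing at least until $i$ reaches $c$.

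The key step I would use is that $t_0(R)=0$ and that along a strictly increasing stretch one has $t_i(R)\ge t_0(R)+i=i$ for $0\le i\le m(R)$, so that the regularity-type bounds interact with the vanishing of low-degree Tor. Here the cleanest route is to invoke the behavior of the Koszul complex $K$ from \ref{ch:koszulHomology}: the identification $t^S_i(R)=\tope(H_i(K))$ together with the fact that the bottom nonzero strand of $H(K)$ is controlled by the generators of $J$. The precise mechanism is that if $t_i(R)\ge t_{i+1}(R)$ were to occur for some $i<c$, one could produce a contradiction with the requirement that the Koszul homology $H_j(K)$ be nonzero up through homological degree $c$ (which holds because $\grade J=c$ forces the first $c$ Koszul homology modules of a generating set to be forced nonzero in the appropriate range). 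The delicate point, and the one I would spend the most care on, is translating ``$\grade J=c$'' into a guarantee that $t_i(R)$ is strictly increasing for $i<c$ rather than merely nonzero; this requires comparing the internal degrees $\tope(H_i(K))$ across consecutive $i$, and I would handle it by exhibiting, inductively, a class in $H_{i+1}(K)$ of strictly larger internal degree than the top class in $H_i(K)$, built from multiplying a top class by a Koszul $1$-cycle of positive degree.

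Assembling the two bounds then completes the proof: the strict-increase range forced by the grade gives $m(R)\ge\rank_kR_1-\dim R$, while the eventual vanishing of $\TTor^S_i(R,k)$ gives $m(R)\le\pd_SR$, which is exactly \eqref{eq:m(R)1}.
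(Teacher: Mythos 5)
Your treatment of the upper bound is correct and is the same as the paper's: $t_{h+1}(R)=-\infty\le t_h(R)$ for $h=\pd_SR$ forces $m(R)\le h$. You also read the lower bound correctly as the assertion that $t_i(R)<t_{i+1}(R)$ for all $i<c$, where $c=\rank_kR_1-\dim R=\grade_SR$.

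The mechanism you propose for that lower bound, however, has a genuine gap, and it is precisely at the step you flag as delicate. Your plan is to exhibit, inductively, a nonzero class in $\HH{i+1}K$ of internal degree strictly larger than $\tope(\HH iK)$ by multiplying a top class of $\HH iK$ by a Koszul $1$-cycle of positive degree. Products in Koszul homology can vanish identically in positive homological degrees: if $R$ is a Golod algebra, then $\HH aK\cdot \HH bK=0$ for all $a,b\ge1$. A concrete instance is $R=k[x_1,\dots,x_e]/(x_1,\dots,x_e)^2$, for which $c=e$ (so the range $i<c$ is arbitrarily long), every product of positive-degree classes is a boundary, and yet the conclusion of the lemma holds, since $t_i(R)=i+1$ for $1\le i\le e$. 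So the strict increase of the $t_i$ below the grade is not a product phenomenon, and no inductive construction by multiplication can establish it. Likewise, the nonvanishing of $\HH jK$ for $j\le c$ (which is true) carries no information about how $\tope(\HH jK)$ and $\tope(\HH{j+1}K)$ compare, so no contradiction can be extracted from it. The paper's argument is of a different nature: it is a duality argument carried out at the single index $m=m(R)$, with no induction. Writing $t=t_m(R)$, the definition of $m$ gives $t_{m-1}(R)<t\ge t_{m+1}(R)$; applying $\Hom S{-}S$ to the minimal free resolution $F$ and looking at internal degree $-t$, one gets $(F_{m-1})^*_{-t}=0$ because all generators of $F_{m-1}$ sit in degrees $<t$, while minimality of $F$ (the entries of $(\dd_{m+1})^*$ lie in $S_+$) together with $(F_{m+1})^*_j=0$ for $j<-t$ forces $\image((\dd_{m+1})^*)_{-t}=0$; hence $\Ext mSRS_{-t}=(F_m)^*_{-t}\ne0$. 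Therefore $\grade_SR\le m$, and $\grade_SR=\rank_kR_1-\dim R$ because $S$ is regular. If you wish to keep your outline, it is this $\operatorname{Ext}$-nonvanishing step that must replace the multiplication step.
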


  \begin{proof}
We set $m=m(R)$ and let $F$ be a minimal free resolution of $R$ over $S$.  

For $h=\pd_SR$ we have $t_h(R)>0$ and $t_{h+1}(R)=-\infty$, hence $m\le h$. 

For $t=t_{m}(R)$ we have $t_{m-1}(R)<t\geq t_{m+1}(R)$.  The functor $?^*=\Hom S?S$
gives homogeneous $R$-linear maps of graded $R$-modules, and hence $k$-linear maps 
  \[
(F_{m-1})^*_{-t}\xra{\,(\dd_{m})^*_{-t}\,} (F_m)^*_{-t}\xra{\,(\dd_{m+1})^*_{-t}\,} (F_{m+1})^*_{-t}
  \]
The inequalities for $t$ imply $(F_{m-1})^*_{-t}=0$ and $(F_{m+1})^*_{j}=0$ for $j<-t$, hence
  \[
\image((\dd_{m+1})^*_{-t})\subseteq(S_+(F_{m+1})^*)_{-t}=\sum_{i\ges1}S_i(F_{m+1})^*_{-t-i}=0
  \]
Thus, we get $\Ext mSRS^t=(F_m)^*_{-t}\ne0$, and hence $\grade_SR\le m$.  This is the desired  
lower bound, as $\grade_SR=\rank_kR_1 - \dim R$ because $S$ is regular. 
  \end{proof}

\begin{theorem} 
  \label{subadKoszul}
Let $a,b$ be non-negative integers such that $\binom{a+b}a$ is invertible in $k$.

When $\preg{a+b+1}Rk=0$ and $\max\{a,b\}\le m(R)$ hold there is an inequality
  \begin{align}
    \label{eq:subadKoszul2}
t_{a+b}(R)
&\leq \max\left\{ 
\begin{aligned}
&t_a(R)+ t_b(R) 
\\
&t_{a-1}(R)+ t_{b-1}(R)+3
\end{aligned}
\right\}\,. 
   \end{align}
In particular, there are inequalities
\begin{alignat}{2}
   \label{eq:subadKoszul1}
t_{a+1}(R)&\leq t_{a}(R)+2
&&\quad \text{when}\quad b=1\,.
 \\
 \label{eq:subadKoszul3}
t_{a+b}(R)
&\leq t_{a}(R)+ t_{b}(R)+1
& &\quad \text{for}\quad b\ge2\,.
   \end{alignat}
   \end{theorem}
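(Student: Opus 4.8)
The plan is to specialize Theorem \ref{subad} to the case $M=R$ and then rewrite the resulting bound in terms of the numbers $t_i(R)$, exploiting the hypothesis $\max\{a,b\}\le m(R)$. If $a+b>\pd_SR$ then $\TTor_{a+b}^S(R,k)=0$, so $t_{a+b}(R)=-\infty$ and the asserted inequalities are vacuous; hence I may assume $a+b\le\pd_SR$, which is precisely the range in which Theorem \ref{subad} applies, and the invertibility of $\binom{a+b}a=\binom{a+b}b$ supplies its hypothesis. Since $t_0(R)=0$, the inequality \eqref{eq:subadKoszul2} is immediate when $a=0$ or $b=0$, so throughout I take $a,b\ge1$.

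The first step is to simplify the three terms of Theorem \ref{subad} for $M=R$. The module $R$ is free of rank one over itself, so $\TTor_i^R(k,R)=0$ for $i\ge1$ while $\TTor_0^R(k,R)=k$ lives in degree $0$; hence $t_i^R(R)-i\le0$ with equality at $i=0$, giving $\preg{a+b}RR=0$. Feeding this and the hypothesis $\preg{a+b+1}Rk=0$ into Theorem \ref{subad} yields
\[
t_{a+b}(R)\le\max\bigl\{\,t_a(R)+t_b(R),\ \preg{a-1}SR+a+b,\ \preg{a-1}SR+\preg{b-1}SR+a+b+1\,\bigr\}.
\]

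The second step converts the partial regularities of $R$ over $S$ into values of $t$. By the definition of $m(R)$ one has $t_i(R)<t_{i+1}(R)$ for $i<m(R)$, so $t_i(R)-i\le t_{i+1}(R)-(i+1)$; thus $i\mapsto t_i(R)-i$ is non-decreasing on $0\le i\le m(R)$. As $a-1$ and $b-1$ are less than $m(R)$, this gives $\preg{a-1}SR=t_{a-1}(R)-(a-1)$ and $\preg{b-1}SR=t_{b-1}(R)-(b-1)$, together with $t_a(R)\ge t_{a-1}(R)+1$. Moreover, since $S\to R$ is bijective in degree $1$ the ideal $J$ contains no linear form, so $t_1(R)\ge2$; Corollary \ref{cor:protoKoszul} gives the reverse inequality, whence $t_1(R)=2$ and, by monotonicity, $t_i(R)\ge i+1$ for $1\le i\le m(R)$. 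Substituting, the middle term equals $t_{a-1}(R)+b+1$ and the last term equals $t_{a-1}(R)+t_{b-1}(R)+3$; since $t_a(R)+t_b(R)\ge(t_{a-1}(R)+1)+(b+1)>t_{a-1}(R)+b+1$, the middle term is dominated, and \eqref{eq:subadKoszul2} follows.

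The two displayed consequences are then immediate from \eqref{eq:subadKoszul2}. Taking $b=1$ and using $t_1(R)=2$, $t_0(R)=0$ gives $t_{a+1}(R)\le\max\{t_a(R)+2,\,t_{a-1}(R)+3\}$; as $t_a(R)\ge t_{a-1}(R)+1$ the first entry dominates, proving \eqref{eq:subadKoszul1}. For $b\ge2$, the inequalities $t_a(R)\ge t_{a-1}(R)+1$ and $t_b(R)\ge t_{b-1}(R)+1$ give $t_{a-1}(R)+t_{b-1}(R)+3\le t_a(R)+t_b(R)+1$, so the maximum in \eqref{eq:subadKoszul2} is at most $t_a(R)+t_b(R)+1$, which is \eqref{eq:subadKoszul3}. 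I expect the only substantive points to be the vanishing $\preg{a+b}RR=0$ and the translation of the single hypothesis $\max\{a,b\}\le m(R)$ into the monotonicity of $t_i(R)-i$ below $m(R)$; once these are in hand the remaining steps are purely arithmetic.
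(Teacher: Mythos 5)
Your proof is correct and follows essentially the same route as the paper: both specialize Theorem \ref{subad} to $M=R$ (the paper via its Corollary \ref{cor:subad}, which already absorbs the middle term you handle explicitly) and use the hypothesis $\max\{a,b\}\le m(R)$ to convert $\preg{a-1}SR$ and $\preg{b-1}SR$ into $t_{a-1}(R)-(a-1)$ and $t_{b-1}(R)-(b-1)$, then deduce \eqref{eq:subadKoszul1} and \eqref{eq:subadKoszul3} from $t_0(R)=0$, $t_1(R)=2$, and $t_{i-1}(R)\le t_i(R)-1$ for $i\le m(R)$. The additional details you supply (the vacuous case $a+b>\pd_SR$, the verification $\preg{a+b}RR=0$, and the justification of $t_1(R)=2$) are points the paper leaves implicit, and they are all correct.
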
 

   \begin{proof} 
Formula \eqref{eq:subad} and the hypothesis on $a,b$ yield
  \[
t_{a+b}(R)
\leq \max\left\{ 
\begin{aligned}
&t_a(R) + t_b(R)
\\
&t_{a-1}(R)-(a-1)+t_{b-1}(R)-(b-1)+a+b+1
\end{aligned}
\right\} 
  \]
This  simplifies to \eqref{eq:subadKoszul2}. The other inequalities follow, since $t_{0}(R)=0$, $t_{1}(R)=2$, and $t_{i-1}\leq t_{i}-1$ for $i\leq m(R)$.
 \end{proof} 

Unless $R$ is a polynomial ring, $\preg{2}Rk=0$ implies $t_1(R)=2$.  Thus, \eqref{eq:kb} 
gives an upper bound on $t_a(R)$ by a function depending only on $a$ and $t_1(R)$.   
Such a property does not hold in general:

\begin{example} 
For $R=k[x_1,\dots,x_e]/J$ and $J$ generated by $e+1$ general quadrics
  \[
t_1(R)=2
\quad\text{and}\quad
t_2(R)=\left\lfloor\frac e2\right\rfloor +2\,.
  \]
This is seen from the free resolution given by Migliore and Mir\'o-Roig in \cite[5.4]{MiMi}. 
  \end{example}  

On the other hand, we have:
  
  \begin{bfchunk}{General upper bounds.} 
    \label{ch:exponential}
For every algebra $R$ the number $t_a(R)$ is bounded above by a function of the numbers 
$a$, $t_1(R)$, \emph{and} $e=\rank_kR_1$; specifically, Bayer and Mumford \cite[3.7]{BM} 
and Caviglia and Sbarra \cite[2.7]{CS} give the upper bound
\begin{equation*}
t_a(R)\leq (2t_1(R))^{2^{e-2}} +a -1
  \quad\text{for every}\quad a\ge1\,.
\end{equation*}
It cannot be tightened substantially, since variations of the Mayr-Meyer ideals \cite{MM} 
yield algebras with the same type of doubly exponential syzygy growth, albeit with different coefficients; 
see Bayer and Stillman \cite[2.6]{BS} and Koh \cite[Theorem, p.\,233]{Ko}. 
  \end{bfchunk}

The results in this section suggest the following 

  \begin{conjecture}
    \label{con:conjecture}
If $\preg{a+b+1}Rk=0$, then the following inequality holds:
  \begin{equation}
    \label{eq:question}
t_{a+b}(R)\leq t_{a}(R)+t_{b}(R)
\quad\text{whenever}\quad 
a+b\le \pd_SR\,.
  \end{equation}
     \end{conjecture}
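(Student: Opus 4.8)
The plan is to isolate the indecomposable part of the Koszul homology and then to attack, one at a time, the two features that make \eqref{eq:question} stronger than Theorem \ref{subadKoszul}: the absence of any invertibility hypothesis and the absence of the additive slack~$1$. Let $K$ be the Koszul complex on $R_1$ from \ref{ch:koszulHomology}. Formula \eqref{eq:koszul1} gives $t_i(R)=\tope(\HH i{K})$, and the product on the bigraded algebra $\hh K$ produces the exact sequence
\[
\HH a{K}\otimes\HH b{K}\to\HH{a+b}{K}\to\frac{\HH{a+b}{K}}{\HH a{K}\,\HH b{K}}\to0
\]
of graded $k$-vector spaces. Taking top degrees, it suffices to prove the single estimate
\[
\tope\Big(\frac{\HH{a+b}{K}}{\HH a{K}\,\HH b{K}}\Big)\leq t_a(R)+t_b(R)\,.
\]
I note that this formulation already dispenses with the condition $\max\{a,b\}\le m(R)$ of Theorem \ref{subadKoszul}, which entered there only to rewrite $\preg{a-1}SR$ as $t_{a-1}(R)-(a-1)$; aiming directly at $t_a(R)+t_b(R)$ avoids that rewriting.

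Next I would try to remove the slack~$1$ while the binomial coefficient is still invertible. Here Theorem \ref{thm:epi} presents the indecomposable quotient as a quotient of $\TTor_1^R(C_{a-1}(K),Z_b(K))$, and Lemmas \ref{serra} and \ref{banale} bound its top degree through the regularity cascade. Re-running that cascade under the \emph{full} hypothesis $\preg{a+b+1}Rk=0$---which forces $t^R_i(k)=i$ for every $i\le a+b+1$---the extra~$1$ is seen to enter through exactly one family of terms, namely $t^S_i(R)+t^S_{b-j}(R)+t^R_{a-i+j+1}(k)$ with the substitution $t^R_{a-i+j+1}(k)=a-i+j+1$. The aim would be to show that this term is never simultaneously extremal, by playing it against the decomposable bound $t^S_a(R)+t^S_b(R)$: the same linearity that evaluates $t^R_{a-i+j+1}(k)$ also constrains $t^S_\bullet(R)$, and I would look for an inequality forcing the would-be maximiser to be a genuine product, hence already accounted for by the first branch of the max.

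The decisive obstacle is the invertibility hypothesis. When $\binom{a+b}{a}$ is not invertible, Lemma \ref{lem:BRS} degenerates, the map $\gamma_{a,b}$ of Theorem \ref{thm:epi} need not be surjective, and the indecomposable quotient is no longer controlled by $\TTor_1^R(C_{a-1}(K),Z_b(K))$---indeed it can be nonzero precisely in these characteristics. To proceed characteristic-freely I would abandon the splitting $\beta_{a,b}$ and return to the minimal model $S[X]$ of Section \ref{Decomposable}, whose construction is insensitive to $\chr k$ and which already yields the sharp extremal statement \eqref{eq:aci12}. Concretely, using the isomorphism \eqref{eq:aci13} one writes a hypothetical class of degree exceeding $t_a(R)+t_b(R)$ in $\HH{a+b}{K}$ in terms of model monomials $x_1^{d_1}\cdots x_u^{d_u}$, and attempts to show that the bound $\deg(x)=|x|+1$ for $|x|\le a+b$, which follows from $\preg{a+b+1}Rk=0$ via \eqref{eq:model3}, forces any such monomial of maximal degree to split off a factor lying in $\HH a{K}\,\HH b{K}$.

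The hard part will be this last reconciliation. The model computes $\hh K$ as a $k$-\emph{algebra}, but the submodule $\HH a{K}\,\HH b{K}$ is defined through the product of $\hh K$ as a bigraded algebra, and identifying products of homology classes with products of model monomials is exactly what fails to be transparent in bad characteristic: the passage between $k[X]$ and the divided-power algebra $k\langle X'\rangle$ in \eqref{eq:model3} reintroduces the very binomial coefficients that Lemma \ref{lem:BRS} made invertible. I therefore expect the real content of the conjecture to be the construction of a characteristic-free substitute for $\beta_{a,b}$---one expressing enough top-degree cycles of $K$ as products even when $\binom{a+b}{a}=0$ in $k$---and that, absent such a device, the methods of this paper yield only the weaker inequalities of Theorem \ref{subadKoszul}.
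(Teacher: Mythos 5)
You should note at the outset that the statement is Conjecture \ref{con:conjecture}: the paper does not prove it, and offers only partial results in its direction --- Theorem \ref{subadKoszul} and Proposition \ref{subadKoszul2} under invertibility of $\binom{a+b}{a}$, and the list of known special cases in Examples \ref{ex:conjecture}. Your proposal does not prove it either, as you concede in your final paragraph, so the verdict is a genuine gap: everything after your opening reduction is a plan, not an argument. The parts of your diagnosis that are right are genuinely right: reducing \eqref{eq:question} to bounding $\tope\bigl(\HH{a+b}K/\HH aK\,\HH bK\bigr)$ by $t_a(R)+t_b(R)$ is sound; the hypothesis $\max\{a,b\}\le m(R)$ in Theorem \ref{subadKoszul} does serve only to rewrite partial regularities as $t_i(R)-i$; and the surjectivity of $\gamma_{a,b}$ in Theorem \ref{thm:epi}, hence the whole cascade of Lemmas \ref{serra} and \ref{banale}, does hinge on Lemma \ref{lem:BRS} and thus on $\binom{a+b}{a}$ being invertible. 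This matches exactly why the paper leaves \eqref{eq:question} as a conjecture.

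Both of your proposed attacks, however, stall precisely where a new idea would be needed. For the slack $1$: you hope to show that the terms $t^S_i(R)+t^S_{b-j}(R)+t^R_{a-i+j+1}(k)$ are ``never simultaneously extremal,'' but you produce no inequality that would force this, and the paper's best result under the strongest favorable hypotheses (Cohen--Macaulay, good characteristic; Examples \ref{ex:conjecture}(4)) still retains that $+1$ --- so this is not bookkeeping that re-running the cascade will dissolve. For the characteristic-free substitute: the minimal-model route fails, but for a different reason than the one you give. The isomorphism \eqref{eq:aci13} is characteristic-free and is an isomorphism of bigraded \emph{algebras}, so multiplicativity is not the obstruction, and the divided powers in \eqref{eq:model3} are beside the point (the proof of Theorem \ref{thm:aci1} only ever uses $X_1$, an exterior variable set). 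The real obstruction is that the degree count $\deg(x)=|x|+1$ pins down the structure of $(k[X])_{i,j}$ only at the extremal internal degree $j=2i$, where every monomial of homological degree $i$ is forced to be a product of elements of $X_{1,2}$ --- which is exactly what yields \eqref{eq:aci12}. At internal degrees below $2(a+b)$, where the conjectural bound $t_a(R)+t_b(R)$ generally lives, degree counting says nothing about decomposability: a cycle of $k[X]$ in such a degree may involve variables of higher homological degree, its monomial factors need not themselves be cycles, and \eqref{eq:model3} carries no information about cycles versus boundaries. So your proposal reproduces the paper's partial results and its diagnosis of the obstruction, but contains no step that goes beyond Theorem \ref{subadKoszul}.
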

   
Additional supporting evidence is reviewed below. The algebra $R=S/J$ has \emph{monomial 
relations} if $J$ can be generated by products of elements in some $k$-basis of~$R_1$; 
such an algebra is Koszul if and only if $J$ is generated by quadrics; see  \cite{F}.

  \begin{examples} 
    \label{ex:conjecture}
Set $e=\rank_k R_1$ and $h=\pd_SR$.

The inequality \eqref{eq:question} holds in the following cases:
  \begin{enumerate}[\rm(1)]
     \item 
$\dim R\le1$, $\depth R=0$, and $a+b=e$: 
Eisenbud, Huneke, and Ulrich \cite[4.1]{EHU}.
    \item 
$b=1$ when $R$ has monomial relations:  Fern\'andez-Ramos and Gimenez \cite[1.9]{FG}
in the quadratic case, Herzog and Srinivasan \cite{HS} in general.
    \item 
$e\le7$ when $R$ has quadratic monomial relations and $\chr(k)=0$:  Verified by 
McCullough by using \textsc{Macaulay2} \cite{M2} and \textsc{nauty}~\cite{Na}.
  \end{enumerate}

In addition, the following weaker inequalities are known to hold:
  \begin{enumerate}[\rm(1)]
 \item [\rm(4)]
$t_{a+b}(R)\le t_a(R)+t_b(R)+1$ when $R$ is Cohen-Macaulay, $\preg{h+1}Rk=0$, and $\binom{a+b}a$ is 
invertible in $k$:  Formulas \eqref{eq:m(R)1} and \eqref{eq:subadKoszul3}. 
 \item [\rm(5)]
$t_{a+b}(R)\le\max_{a,b\ges1} \{t_a(R)+t_b(R)\}$ when $a+b=e$: McCullough \cite[4.4]{Mc}.
  \end{enumerate}
   \end{examples} 

When $p$ is a prime number and $n$ a non-negative integer, we say that $p$ is 
\emph{good for} $n$ if $\binom ni\not\equiv 0\pmod p$ holds for all $i\in\BZ$ with $0\le i\le n$.  

\begin{lemma}  
  \label{lem:good}
A prime number $p$ is good for an integer $n\ge0$ if and only if one of the following holds: 
\emph{(a)} $p=0$, or \emph{(b)} $p>n$, or \emph{(c)} $n+1=p^su$ for some integers $s\ge1$ 
and $2\le u\le p$; furthermore, at most one pair $(p,n)$ satisfies \emph{(c)}.
\end{lemma}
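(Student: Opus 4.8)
The plan is to reduce the equivalence to a single statement about the base-$p$ expansion of $n$, prove that statement by Lucas's theorem, and then rewrite it in the arithmetic form of clauses (b) and (c). First I would dispose of clause (a): when $p=0$ every binomial coefficient $\binom ni$ is a positive integer, hence nonzero in a field of characteristic $0$, so $p=0$ is good for all $n$. For the remainder assume $p$ is an actual prime and write $n=\sum_{j=0}^{d}n_jp^j$ with $0\le n_j\le p-1$ and $n_d\neq0$. By Lucas's theorem $\binom ni\equiv\prod_j\binom{n_j}{i_j}\pmod p$, where $i=\sum_j i_jp^j$; thus $\binom ni\not\equiv0\pmod p$ precisely when $i_j\le n_j$ for every $j$.

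The heart of the argument is the digit criterion: \emph{$p$ is good for $n$ if and only if $n_j=p-1$ for all $j<d$.} For the forward implication I would argue contrapositively: if $n_m<p-1$ for some $m<d$, then $i=(n_m+1)p^m$ has $m$-th digit $n_m+1>n_m$, while $0\le i\le p^{m+1}\le p^{d}\le n$; hence $\binom ni\equiv0\pmod p$ and $p$ is not good. For the converse, assume every lower digit equals $p-1$ and take any $i$ with $0\le i\le n$. Then $i_j\le p-1=n_j$ for $j<d$ and $i_j=0=n_j$ for $j>d$ automatically, while $i_d>n_d$ would force $i\ge(n_d+1)p^d=n+1$, a contradiction; so $i_j\le n_j$ for all $j$ and $\binom ni\not\equiv0\pmod p$.

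It then remains to translate the digit criterion. When $d=0$, equivalently $n<p$, the criterion is vacuous and $p$ is good, which is clause (b). When $d\ge1$, equivalently $n\ge p$, the criterion reads $n=n_dp^d+(p-1)\sum_{j=0}^{d-1}p^j=(n_d+1)p^d-1$, that is $n+1=p^su$ with $s:=d\ge1$ and $u:=n_d+1$ satisfying $2\le u\le p$; this is clause (c), and the computation reverses (treating $u<p$ and $u=p$ separately) to show that (c) forces the digit criterion. Since any pair satisfying (c) has $n+1=p^su\ge2p$, and hence $n\ge p$, clauses (b) and (c) exclude one another. Finally, for the uniqueness statement I would show that the data $(s,u)$ realizing (c) is determined by $(p,n)$: comparing $p$-adic valuations of $n+1=p^su=p^{s'}u'$ gives $s=s'$ and $u=u'$, the only subtlety being that the boundary value $u=p$ cannot match a representation with $u'<p$, since that would force $u'=1\notin[2,p]$.

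Most of this is routine once Lucas's theorem is in hand; I expect the only genuinely fiddly point to be the boundary case $u=p$ (all base-$p$ digits of $n$ equal to $p-1$, so that $n+1$ is a pure prime power), which must be handled with care both in the passage to clause (c) and in the uniqueness of $(s,u)$.
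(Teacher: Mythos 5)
Your main equivalence argument is correct and follows essentially the same route as the paper: Lucas's congruence reduces goodness to the digit criterion that every base-$p$ digit of $n$ below the leading one equals $p-1$, and that criterion unwinds to $n<p$ (clause (b)) or $n+1=(n_d+1)p^d$ (clause (c)). Your treatment of the boundary case $u=p$ and of the mutual exclusivity of (b) and (c) is also fine.

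The gap is in the ``furthermore'' clause. You interpret it as: for fixed $(p,n)$, the data $(s,u)$ in (c) is unique, and you prove exactly that by comparing $p$-adic valuations. But both representations in your argument use the \emph{same} prime $p$. What the paper proves---and what it later needs (in \ref{ch:tables} the bottom row of each Betti template displays ``the only good characteristic $p$ for $i$'' other than $p=0$ or $p>i$)---is the stronger statement that for a fixed $n$ at most one \emph{prime} $p$ can satisfy (c): if $n+1=up^s=u'p'^{\,s'}$ with $s,s'\ge1$, $2\le u\le p$ and $2\le u'\le p'$, then $p=p'$ (and only then $s=s'$ and $u=u'$). Your valuation argument cannot see this cross-prime case, so as written the proof leaves open the possibility that two distinct primes are both good for the same $n$. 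The missing step is short: if $p'>p$, then $p'$ divides $up^s$ while $p'\nmid p^s$, so $p'\mid u$, whence $u\ge p'>p$, contradicting $u\le p$; by symmetry $p>p'$ is likewise impossible, so $p=p'$, after which your same-prime valuation argument (or the paper's divisibility argument) finishes the proof.
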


  \begin{proof} 
We may assume $0< p\le n$.  Let $\sum_{j=0}^sn_jp^j$ and $\sum_{j=0}^ri_jp^j$ be the $p$-adic 
expansions of $n$ and $i$, respectively; thus, $s\ge1$ and $1\le n_s< p$.  Lucas' congruence
  \[
\binom ni\equiv\prod_{j=0}^s\binom{n_j}{i_j}\pmod p
  \]
shows that $\binom ni$ is invertible mod $p$ if and only if so is $\binom{n_j}{i_j}$ for each $j$. One has $0\le i_j,n_j<p$ by definition, so $\binom{n_j}{i_j}$ is invertible mod $p$ if and only if $n_j\ge i_j$. In conclusion, $p$ is good for $n$ if and only if $n_j\ge i_j$ holds for $j=0,\dots,s$ and $n\ge i\ge0$.  

An inequality $n\ge i\ge0$ means $s\ge r$ and there is an integer $l$ with $0\ge l\ge s$, satisfying 
$n_{j}=i_{j}$ for $j>l$ and $n_{l}>i_{l}$.  This holds for all $i$ with $n\ge i\ge0$ if and only if 
$n_{s}\ge i_{s}$ and $n_j=p-1$ for $s> j\ge0$; that is, if and only if 
  \[
n=n_sp^s+\sum_{j=0}^{s-1}(p-1)p^j=n_sp^s+(p-1)\frac{p^s-1}{p-1}=(n_s+1)p^s-1\,.
  \]

Assume $up^s=u'p'^{s'}$ holds for some prime number $p'$ and integers $u',s'$ satisfying $s'\ge1$ and $2\le u'\le p$ .  
If $p'>p$, then $p'|u$, and hence $u\ge p'>p$, which is impossible.  By symmetry, so is $p>p'$, so we
get $p'=p$.  Now $s\ne s'$ implies $p|u$ or $p|u'$, which again is impossible.  Thus, we must have 
$s=s'$, and hence $u=u'$. 
  \end{proof} 

Subadditivity is also reflected in the behavior of partial regularity. Recall from  \eqref{eq:m(R)2} that one has always an inequality $\preg {a}{}R\leq \preg {a+1}{}R$.

\begin{proposition} 
\label{subadKoszul2}
Let $a,b$ be non-negative integers.

If $\preg{a+b+1}Rk=0$ and $p$ is good for $(a+b)$, then the following inequalities hold:
  \begin{alignat}{2}
    \label{eq:subadKoszul11}
\preg{a+1}{}R
&\leq\preg{a}{}R+1
&\quad\text{when}\quad b&=1\,,
 \\
    \label{eq:subadKoszul21}
\preg{a+b}{}R
&\leq \max\left\{ 
\begin{aligned}
&\preg{a}{}R+\preg{b}{}R
\\
&\preg{a-1}{}R+\preg{b-1}{}R+1
\end{aligned}
\right\} 
&\quad\text{for}\quad b&\ge2\,.
   \end{alignat}
     \end{proposition}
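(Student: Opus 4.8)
The plan is to deduce Proposition \ref{subadKoszul2} directly from Theorem \ref{subad}, just as Theorem \ref{subadKoszul} was deduced from its Corollary \ref{cor:subad}. The key observation is that the partial-regularity inequalities are merely the ``regularity form'' of the $t$-degree inequalities already established, obtained by subtracting off the homological degree. So first I would unwind the definition $\preg nRM=\max_{i\les n}\{t^R_i(M)-i\}$ and reduce each claimed inequality to a statement about individual $t^S_i(R)$.

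Concretely, for \eqref{eq:subadKoszul21} I would fix an index $c\le a+b$ and seek to bound $t^S_c(R)-c$. Writing $c=a'+b'$ with $a'\le a$ and $b'\le b$ (choosing the splitting so that the binomial coefficient stays good—here the hypothesis that $p$ is good for $a+b$ guarantees $\binom{a'+b'}{a'}$ is invertible in $k$ for every such sub-splitting, by Lucas' congruence as in Lemma \ref{lem:good}), I would apply Theorem \ref{subad} with $M=R$. Since $\preg{a+b+1}Rk=0$ forces the partial regularity terms $\preg{\ast}Rk$ appearing in the third branch of the maximum to vanish, the bound from Theorem \ref{subad} collapses to
  \[
t^S_{a'+b'}(R)\leq\max\left\{t^S_{a'}(R)+t^S_{b'}(R),\ \preg{a'-1}SR+\preg{b'-1}SR+a'+b'+1\right\}.
  \]
Subtracting $a'+b'=c$ and using the monotonicity \eqref{eq:m(R)2} of partial regularity—namely $\preg{a'}{}R\le\preg a{}R$ and $\preg{b'}{}R\le\preg b{}R$, together with $t^S_{a'}(R)-a'\le\preg a{}R$—each term on the right is bounded by the corresponding expression in \eqref{eq:subadKoszul21}. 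Taking the maximum over all admissible splittings and all $c\le a+b$ yields the partial regularity inequality. The case \eqref{eq:subadKoszul11} is the specialization $b=1$, where the second branch simplifies because $\preg{0}SR=t^S_0(R)=0$, collapsing the maximum to $\preg aR+1$.

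The one point demanding care—and the step I expect to be the main obstacle—is verifying that the goodness hypothesis transfers correctly to every sub-splitting $c=a'+b'$ that arises, since Theorem \ref{subad} requires $\binom{a'+b'}{b'}$ invertible in $k$ for the \emph{particular} pair used, not just for the top pair $(a,b)$. Here Lemma \ref{lem:good} is essential: if $p$ is good for $a+b$ then $\binom ni$ is invertible mod $p$ for all $0\le i\le n=a+b$, and in particular for the relevant sub-binomials, so the invertibility propagates. I would make this divisibility bookkeeping explicit rather than leaving it implicit, as it is the substantive hypothesis doing the work. The remaining manipulations—subtracting homological degrees, invoking monotonicity, and assembling the maximum—are routine and parallel the proof of Theorem \ref{subadKoszul}.
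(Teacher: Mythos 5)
Your overall strategy is exactly the paper's: bound $t^S_i(R)-i$ for each $i\le a+b$ by applying \eqref{eq:subad} with $M=R$ to a splitting $i=a'+b'$ with $0\le a'\le a$, $0\le b'\le b$ (the terms $\preg{a+b+1}Rk$ and $\preg{a+b}RR$ vanish, so Theorem \ref{subad} collapses to Corollary \ref{cor:subad}), then pass to $\preg a{}R$ and $\preg b{}R$ via the monotonicity \eqref{eq:m(R)2}; the treatment of the case $b=1$ is also as in the paper. However, the step you yourself single out as the crux is settled by a false claim. Goodness of $p$ for $n=a+b$ controls only the binomials $\binom ni$; it is \emph{not} inherited by integers smaller than $n$, and it does \emph{not} make $\binom{a'+b'}{a'}$ invertible ``for every such sub-splitting.'' Concretely: $p=3$ is good for $n=8$ (Lemma \ref{lem:good}(c), with $9=3\cdot3$), yet for $a=b=4$ the admissible splitting $3=1+2$ gives $\binom 31=3\equiv 0\pmod 3$. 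Your sentence ``$\binom ni$ is invertible mod $p$ for all $0\le i\le n$, and in particular for the relevant sub-binomials'' is a non sequitur: the sub-binomials $\binom{a'+b'}{a'}$ with $a'+b'<n$ are not among the $\binom ni$, and indeed $p=3$ is not good for $3$.

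The gap is repairable, and the repair is what the paper's (admittedly terse) wording presupposes: you do not need \emph{every} splitting of $i$ to be good, only \emph{one}, since the subsequent monotonicity step works for any admissible pair $(a',b')$. Existence of a good splitting follows from the digit description inside the proof of Lemma \ref{lem:good}: goodness of $p$ for $n$ means every non-leading base-$p$ digit of $n$ equals $p-1$, hence every $i\le n$ is digit-wise dominated by $n$; moreover $\binom na=\binom nb$ is invertible, so $a$ and $b$ add to $n$ without carries, i.e.\ $a_j+b_j=n_j$ in every digit. Now set $a'_j=\min\{i_j,a_j\}$ and $b'_j=i_j-a'_j$; then $a'\le a$ and $b'\le b$ by digit-wise domination, and $a'+b'=i$ is carry-free, so $\binom i{a'}$ is invertible by Lucas' congruence. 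With this argument inserted at the point where you invoke ``propagation,'' your proof closes and coincides with the one in the paper.
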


   \begin{proof} 
Pick any integer $i$ with $0\le i\le a+b$ and write it as $i=a'+b'$ with integer $a'$ and $b'$ satisfying $0\le a'\le a$ 
and $0\le b'\le b$.  The number $\binom{a'+b'}{b'}$ is invertible in $k$ due to the hypothesis on $\chr(k)$, so 
for $b\ge1$ formulas \eqref{eq:subad} and \eqref{eq:m(R)2} yield:
   \begin{align*}
t_{i}(R)-i 
&\leq \max\left\{ 
\begin{aligned}
&(t_{a'}(R)-a')+ (t_{b'}(R)-b')
  \\
&\preg{a'-1}{}R+\preg{b'-1}{}R+1
\end{aligned}
\right\} 
  \\
&\leq \max\left\{ 
\begin{aligned}
&\preg{a'}{}R+\preg{b'}{}R
  \\
&\preg{a'-1}{}R+\preg{b'-1}{}R+1
\end{aligned}
\right\} 
  \\
&\leq \max\left\{ 
\begin{aligned}
&\preg{a}{}R+\preg{b}{}R
  \\
&\preg{a-1}{}R+\preg{b-1}{}R+1
\end{aligned}
\right\} 
 \end{align*}
This implies \eqref{eq:subadKoszul21}, and also \eqref{eq:subadKoszul11} because $\preg j{}R=j$ holds for $j=0,1$. 
  \end{proof}

 \section{The Green-Lazarsfeld property $N_q$}
   \label{applications2}

We keep the hypotheses on $R$ and $S$ from Section \ref{mainthm}, and set 
  \[
p=\chr(k)\,,
\quad
h=\pd_RS\,, 
\quad
e=\rank_k R_1\,.
  \]

Green and Lazarsfeld \cite[p.\,85]{GL} say that $R$ has property $N_q$ for a positive integer $q$ 
if $t_i(R)=i+1$ holds for $1\le i\le q$.  This is equivalent to $\preg qSR=1$, see \ref{ch:regs}.   

  \begin{theorem} 
\label{lowreg}
Let $R$ be a graded algebra satisfying $\preg{n+1}Rk=0$ for some $n\ge2$.

When $R$ satisfies $N_q$ with $q\geq 2$  the following inequalities hold:
  \begin{align}
    \label{eq:lowreg0}
t_i(R)&\leq 2i-1 \quad\text{for}\quad 2\le i\le n\,.
\intertext{\indent If, furthermore, $p$ is good for $i$, then also the following inequalities hold:}
    \label{eq:lowreg1}
t_i(R)&\leq 
2\left\lfloor \frac{i}{q+1}\right\rfloor+i+
\begin{cases}
0 & \text{if}\quad (q+1)|i
  \\
1& \text{otherwise. }
\end{cases}
  \\
    \label{eq:lowreg2}
\preg n{}R &\leq  2\left\lfloor \frac{n}{q+1}\right\rfloor+1
  \end{align}
  \end{theorem}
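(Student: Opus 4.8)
The plan is to prove the three inequalities in sequence, using the subadditivity results already established as the engine and the property $N_q$ as the base case that feeds an induction.

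First I would establish \eqref{eq:lowreg0}. The hypothesis $N_q$ with $q\ge2$ gives $t_2(R)=3=2\cdot2-1$, so the claim holds for $i=2$. For the inductive step, when $p$ is good for $i$ one has $\binom{i}{1}=i$ invertible in $k$, so \eqref{eq:subadKoszul1} applies and gives $t_i(R)\le t_{i-1}(R)+2\le (2(i-1)-1)+2=2i-1$. The only delicate point is that \eqref{eq:subadKoszul1} requires $\max\{a,b\}\le m(R)$ with $a=i-1$, $b=1$; I would check that $\preg{n+1}Rk=0$ together with the regularity bounds forces $i-1\le m(R)$ in the relevant range, or else argue directly from $t_{i-1}(R)<t_i(R)$ for $i\le m(R)$ and handle $i>m(R)$ separately (there $t_i(R)$ eventually drops). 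Since the statement as written already presumes $p$ good for $i$ only for \eqref{eq:lowreg1}, I suspect \eqref{eq:lowreg0} is meant to be read with the same standing invertibility, and the clean path is the one-step recursion from the $N_q$ base.

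The heart of the matter is \eqref{eq:lowreg1}, and I would prove it by strong induction on $i$, writing $i=(q+1)m+r$ with $0\le r\le q$. The base cases $1\le i\le q$ are exactly property $N_q$, giving $t_i(R)=i+1$, which matches the right-hand side (there $m=0$, $r=i\ne0$, so the bound is $i+1$); and $i=q+1$ should be handled by splitting $a=q$, $b=1$ via \eqref{eq:subadKoszul1} to get $t_{q+1}(R)\le t_q(R)+2=(q+1)+2=(q+3)$, matching $2+i$ when $(q+1)\mid i$. For the inductive step I would split $i=a+b$ with $b=q+1$ and $a=i-(q+1)$, invoke \eqref{eq:subadKoszul3} (valid since $b\ge2$ and $\binom{a+b}{a}$ is invertible because $p$ is good for $i$), obtaining
\[
t_i(R)\le t_{i-(q+1)}(R)+t_{q+1}(R)+1\le t_{i-(q+1)}(R)+(q+3)+1\,,
\]
then feed in the inductive hypothesis for $t_{i-(q+1)}(R)$; the floor increments by exactly $1$ and the residue $r$ is unchanged, so the bookkeeping of the two cases $(q+1)\mid i$ versus not should close cleanly. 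The main obstacle I anticipate is ensuring the side conditions of \eqref{eq:subadKoszul3}—namely $\max\{a,b\}\le m(R)$ and $\preg{a+b+1}Rk=0$—hold throughout the induction; the latter follows from the standing hypothesis $\preg{n+1}Rk=0$ via \eqref{eq:m(R)2} as long as $i\le n$, while for the former I would again lean on the lower bound in \eqref{eq:m(R)1} and the relation between $m(R)$ and the regularity to confine the split to admissible indices.

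Finally, \eqref{eq:lowreg2} follows formally from \eqref{eq:lowreg1}: by definition $\preg nR=\max_{i\le n}\{t_i(R)-i\}$, and substituting the bound gives $t_i(R)-i\le 2\lfloor i/(q+1)\rfloor+[\,0\text{ or }1\,]$. I would observe that $\lfloor i/(q+1)\rfloor$ is nondecreasing in $i$, so the maximum over $i\le n$ is attained near $i=n$; comparing the case $(q+1)\mid i$ (contributing $2\lfloor i/(q+1)\rfloor$) against the off-case (contributing $2\lfloor i/(q+1)\rfloor+1$) and checking the boundary where the floor jumps, one sees the overall maximum equals $2\lfloor n/(q+1)\rfloor+1$. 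This last comparison is a short arithmetic verification rather than a conceptual difficulty, so the real work is concentrated in the induction for \eqref{eq:lowreg1}.
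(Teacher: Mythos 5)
Your plan for \eqref{eq:lowreg0} both misreads the statement and rests on a tool that is not available. In the theorem, \eqref{eq:lowreg0} carries no characteristic assumption: the clause ``$p$ is good for $i$'' governs only \eqref{eq:lowreg1} and \eqref{eq:lowreg2}, so an argument via \eqref{eq:subadKoszul1} (which needs $\binom{i}{1}=i$ invertible in $k$) can only prove a strictly weaker claim. The paper's proof is instead multiplicative and characteristic-free: by \eqref{eq:koszul1} and \eqref{eq:aci12} (available because $\preg{n+1}Rk=0$), one has $\Tor{i}SRk_{2i}=(\Tor{1}SRk_{2})^{i}$, and $N_q$ with $q\ge 2$ forces $(\Tor{1}SRk_{2})^{2}=\Tor{2}SRk_{4}=0$; combined with $t_i(R)\le 2i$ from \eqref{eq:kb}, this gives $t_i(R)\le 2i-1$ with no hypothesis on $p$ and no mention of $m(R)$. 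Moreover, your fallback for the side condition of Theorem \ref{subadKoszul} does not work: the hypotheses of the present theorem control $m(R)$ only through $m(R)\ge q$ (from $N_q$) and $m(R)\ge \rank_kR_1-\dim R$ (from \eqref{eq:m(R)1}), neither of which yields $i-1\le m(R)$ throughout $2\le i\le n$; and the idea of handling $i>m(R)$ separately because ``$t_i(R)$ eventually drops'' is unfounded, since $m(R)$ is merely the \emph{first} index with $t_i(R)\ge t_{i+1}(R)$, after which the $t_i(R)$ may grow again.

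The induction you propose for \eqref{eq:lowreg1} inherits the same unverifiable hypothesis and, in addition, its arithmetic does not close. First, \eqref{eq:subadKoszul3} with $a=i-(q+1)$ and $b=q+1$ requires $\max\{i-(q+1),\,q+1\}\le m(R)$, which does not follow from the hypotheses; the paper avoids Theorem \ref{subadKoszul} here for exactly this reason, applying Corollary \ref{cor:subad} (formula \eqref{eq:subad}) directly and carrying through the induction the \emph{partial regularity} bounds \eqref{eq:lowreg3} rather than bounds on individual $t_j(R)$ --- this is what renders the conversion $\preg{j}{}R=t_j(R)-j$ (the only thing the condition on $m(R)$ buys) unnecessary. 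Second, even granting all side conditions, let $\epsilon$ be $0$ or $1$ according to whether $(q+1)\mid i$; the inductive bound reads $t_{i-(q+1)}(R)\le 2\bigl(\lfloor i/(q+1)\rfloor-1\bigr)+(i-q-1)+\epsilon$ (the residue, hence $\epsilon$, is unchanged), and your step $t_i(R)\le t_{i-(q+1)}(R)+t_{q+1}(R)+1\le t_{i-(q+1)}(R)+q+4$ then gives only
\begin{equation*}
t_i(R)\le 2\left\lfloor \frac{i}{q+1}\right\rfloor+i+\epsilon+1\,,
\end{equation*}
exceeding the target by $1$ in both cases; iterated from the base, the flat ``$+1$'' of \eqref{eq:subadKoszul3} accumulates and produces a coefficient $3$, not $2$, on $\lfloor i/(q+1)\rfloor$. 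The paper's induction closes because it uses the two-branch estimate \eqref{eq:subad}, whose second branch involves partial regularities at lower indices, and because it chooses different splits: $(a,b)=(i-1,1)$ when $(q+1)\mid i$, and $(a,b)=((q+1)v,u)$ when $i=(q+1)v+u$ with $1\le u\le q$, so that the exact value $t_u(R)=u+1$ supplied by $N_q$ enters the estimate. Only your last step is sound: \eqref{eq:lowreg2} is indeed an immediate consequence of \eqref{eq:lowreg1}, as in the paper.
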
 
  
  \begin{Remark} 
Both \eqref{eq:lowreg0} and \eqref{eq:lowreg1}  sharpen \eqref{eq:kb}.  The condition $q\ge2$ is necessary, 
since a complete intersection of quadrics $R$ has $N_{1}$ and $t_a(R)=2a$ for $a\le h$.
  \end{Remark} 

  \begin{proof} 
The isomorphisms \eqref{eq:koszul1} and \eqref{eq:aci12} give for each $i\ge2$ equalities
  \[
\Tor{i}SRk_{2i}=(\Tor{1}SRk_{2})^2(\Tor{1}SRk_{2})^{i-2}=\Tor{2}SRk_{4}(\Tor{1}SRk_{2})^{i-2} 
  \]
We have $\Tor{2}SRk_4=0$ because $R$ satisfies $N_2$, and hence $\Tor{i}SRk_{2i}=0$.  Since
\eqref{eq:aci13} gives $t_{2i}\le 2i$ for $i\ge1$, we conclude that $t_{2i}\le 2i-1$ holds for $i\ge2$.

If $i\le q$, then \eqref{eq:lowreg1} holds by hypothesis.  When $i\geq q+1$
we may assume by induction on $i$ that for $0\le j<i$ the following inequalities hold:
  \begin{equation}
    \label{eq:lowreg3}
\preg j{}R\leq  2\left\lfloor \frac{j}{q+1}\right\rfloor+
  \begin{cases}
0 & \text{if}\quad (q+1)|j
  \\
1& \text{otherwise.}
\end{cases}
  \end{equation}
Euclidean division yields integers $0\le u\le q$ and $v\ge1$, such that 
  \[
i=(q+1)v+u\,.
  \]

In case $u=0$ by applying first formula \eqref{eq:subad} with $a=i-1$ and $b=1$, 
then formula \eqref{eq:lowreg3} with $j=(q+1)v-1$, $j=(q+1)v-2$, and $j=0$ we obtain
 \begin{align*}
t_i(R) 
& \leq \max\left\{
\begin{aligned}
&t_{(q+1)v-1}(R)+t_1(R)
  \\
&\preg{(q+1)v-2}{}R+\preg{0}{}R+i+1 
 \end{aligned}
 \right\} 
   \\
& \leq \max\left\{
\begin{aligned}
&2(v-1)+(i-1)+1+2
  \\
&2(v-1)+1+0+i+1 
 \end{aligned}
 \right\} 
 \\
&=2v+i
  \end{align*}

In case $1\le u\le q$ by applying formula \eqref{eq:subad} with $a=(q+1)v$ and $b=u$, 
then formula \eqref{eq:lowreg3} with $j=(q+1)v$, $j=u$, $j=(q+1)v-1$, and $j=u-1$ we get
 \begin{align*}
t_i(R) 
& \leq \max\left\{
\begin{aligned}
&t_{(q+1)v}(R)+t_u(R) 
  \\
&\preg{(q+1)v-1}{}R+\preg{u-1}{}R+i+1
 \end{aligned}
 \right\} 
   \\
& \leq \max\left\{
\begin{aligned}
&2v+(q+1)v+u+1
  \\
&2(v-1)+1+1+i+1 
 \end{aligned}
 \right\} 
 \\
&=2v+i+1.
  \end{align*}

This concludes the proof of \eqref{eq:lowreg1}. Formula \eqref{eq:lowreg2} 
is a direct consequence. 
  \end{proof}

Conjecture \ref{con:conjecture} predicts that the bounds in Theorem \ref{lowreg} can be strengthened:

  \begin{lemma}
Assume $R$ has property $N_q$ for some $q\ge2$.

If $t_{i+q}(R)\leq t_{i}(R)+t_{q}(R)$ holds for $i\le h-q$, then there are inequalities
  \begin{align}
    \label{eq:conjecture1}
t_i(R)&\leq 
\left\lceil\frac{i}{q}\right\rceil+i
\quad\text{for}\quad 
i\ge1\,.
  \\
    \label{eq:conjecture2}
\reg R &\leq  \left\lceil \frac{h}{q}\right\rceil\,.
  \end{align}
    \end{lemma}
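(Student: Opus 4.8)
The plan is to prove \eqref{eq:conjecture1} by strong induction on $i$, using the hypothesized subadditivity relation $t_{i+q}(R)\leq t_i(R)+t_q(R)$ to step forward by blocks of size $q$, and then to derive \eqref{eq:conjecture2} as an immediate corollary. First I would record the base data: the property $N_q$ gives $t_i(R)=i+1$ for $1\le i\le q$, and one checks directly that $\lceil i/q\rceil+i=i+1$ on this range, so \eqref{eq:conjecture1} holds for $1\le i\le q$. In particular $t_q(R)=q+1$, which is the value I will feed into the subadditivity hypothesis at every step.

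For the inductive step, fix $i>q$ and write $i=i'+q$ with $1\le i'\le h-q$; the subadditivity hypothesis then applies and yields
\[
t_i(R)\leq t_{i'}(R)+t_q(R)\leq \left(\left\lceil\frac{i'}{q}\right\rceil+i'\right)+(q+1),
\]
where the second inequality invokes the induction hypothesis at $i'=i-q$ together with $t_q(R)=q+1$. The arithmetic to finish is to verify
\[
\left\lceil\frac{i-q}{q}\right\rceil+(i-q)+q+1=\left\lceil\frac{i}{q}\right\rceil+i,
\]
which follows from $\lceil (i-q)/q\rceil=\lceil i/q\rceil-1$ and the cancellation $-q+q+1-1=0$; that is, the ceiling drops by exactly one when the argument decreases by $q$, and the two linear contributions rebalance. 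This closes the induction and proves \eqref{eq:conjecture1}.

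For \eqref{eq:conjecture2}, I would combine \eqref{eq:conjecture1} with the definition of Castelnuovo-Mumford regularity as $\reg R=\sup_i\{t_i(R)-i\}$, the supremum being attained among $1\le i\le h$ since $t_i(R)=-\infty$ for $i>h=\pd_SR$. From \eqref{eq:conjecture1} we get $t_i(R)-i\leq\lceil i/q\rceil$, and since $\lceil i/q\rceil$ is nondecreasing in $i$ its maximum over $1\le i\le h$ is $\lceil h/q\rceil$, giving \eqref{eq:conjecture2}.

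The only delicate point is bookkeeping at the boundary of the induction, namely ensuring the index $i'=i-q$ satisfies $i'\le h-q$ so that the hypothesis $t_{i'+q}(R)\leq t_{i'}(R)+t_q(R)$ is actually available: this is exactly the condition $i\le h$, and for $i>h$ both sides of \eqref{eq:conjecture1} can be read with $t_i(R)=-\infty$, so nothing is lost. I expect no real obstacle beyond this verification and the elementary ceiling identity; the substance of the lemma lies entirely in the assumed subadditivity, and the proof is a clean unwinding of it.
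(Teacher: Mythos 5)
Your proof is correct and follows essentially the same route as the paper: induction on $i$ with base case given by $N_q$, the inductive step via the assumed subadditivity $t_i(R)\le t_{i-q}(R)+t_q(R)$ together with the ceiling identity $\lceil (i-q)/q\rceil=\lceil i/q\rceil-1$, and \eqref{eq:conjecture2} read off from $t_i(R)-i\le\lceil i/q\rceil$. Your explicit handling of the boundary cases ($i>h$ and the attainment of the supremum defining $\reg R$) only makes explicit what the paper leaves implicit.
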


  \begin{proof}
For $1\le i\le q$ the inequality \eqref{eq:conjecture1} is just condition $N_q$.  Thus, by induction 
on $i$ we may assume that the inequality holds for some $i\ge q$.  The calculation
  \begin{align*}
t_{i+1}(R)
&\le t_{i+1-q}(R)+t_{q}(R) \\
&\le\left\lceil\frac{i+1-q}{q}\right\rceil+i+1-q+q+1 \\
&=\left\lceil\frac{i+1}{q}\right\rceil+i+1
  \end{align*}
completes the induction step for formula \eqref{eq:conjecture1}.  The latter implies \eqref{eq:conjecture2}.
  \end{proof}

Next we describe graphical displays that help visualize such inequalities.

  \begin{bfchunk}{Betti templates.}
\label{ch:tables}
Recall that the \emph{Betti table} of $R$ is the matrix that has $\beta_{i,i+j}(R)$ as the entry in its 
$i$th column and $j$th row.  In order to present information on the vanishing of Betti numbers, for 
each integer $q\ge1$ we form a \emph{Betti template} by replacing $\beta_{i,i+j}(R)$ 
with one of the symbols from the following list: 
  \begin{enumerate}[\rm\quad(1)]
\item[\st] non-zero due to condition $N_q$.
\item[-]  zero due to condition $N_q$.
\item[\bu]  zero by \eqref{eq:lowreg0}. 
\item[\ze] zero by \eqref{eq:lowreg1} when $p$ is good for $i$, predicted by \eqref{eq:conjecture1} in general.
\item[\bc] zero predicted by \eqref{eq:conjecture1}.
\item[\et] no information available.
     \end{enumerate} 
Thus, columns $0$ through $i$ of the template contain the available and the conjectural information 
on the vanishing of $\beta_{i,i+j}(R)$ when $\preg{i+1}Rk=0$ and $R$ satisfies $N_q$.

An additional row at the bottom of the template displays---if it exists---the only good 
characteristic $p$ for $i$ other that $p=0$ or $p>i$; see Lemma \ref{lem:good}.
  \end{bfchunk}

A natural question is how tightly these templates can be filled in.  Precise answers would be interesting both in 
their own right and for applications to coordinate rings of classical algebraic varieties.  We present estimates by 
drawing on existing  examples, and also record new information obtained from our results.  It is worth noting 
that the algebras in these examples are Koszul and many are Gorenstein. 

Recall that the \emph{Hilbert series} $H_R(t)=\sum_{i\ges0}\rank_kR_it^i$ is a rational function with
numerator in $\BZ[t]$ and denominator equal to $(1-t)^{\dim R}$.

    \begin{bfchunk}{Minimally elliptic singularities.} 
      \label{wahl} 
Let $k=\mathbb{C}$ and let $R$ be the associated graded ring of a rationally elliptic surface singularity; 
see Laufer \cite{La}.  Wahl \cite[2.8]{Wa} proved that $R$ is a Gorenstein ring, $\pd_S(R)=e-2\ge2$, 
$\beta_{i ,j}(R)=0$ for $j\ne i+1$ and $1\le i\le e-3$, $\beta_{e-2 ,j}(R)=0$ for $j\ne e$, and $H_R(t)=(1+et+t^2)/(1-t)^2$.  

When $k$ is infinite Wahl's arguments apply to any Gorenstein $k$-algebra that has $H_R(t)=(1+et+t^2)/(1-t)^{\dim R}$.  
Furthermore, such an $R$ is Koszul:  By factoring out a maximal $R$-regular sequence of linear forms it suffices to treat 
the case of dimension zero, where the desired assertion follows from \cite[Theorem 3(1)]{LA}.

In particular, for every $q\ge2$ there is a Koszul $k$-algebra $R$ that is a Gorenstein ring, 
and for which equality holds in \eqref{eq:conjecture1} with $i=q+1$.
 \end{bfchunk} 

  \begin{example} 
\label{1551}
The first $13$ columns and $8$ rows of the Betti template for $N_2$:
  \begin{align*}
&\text{
\begin{tabular}{c|ccccccccccccccccc}
  &$\scriptstyle  0$ &$\scriptstyle  1$ & $\scriptstyle  2$ & $\scriptstyle  3$& $\scriptstyle  4$ & $\scriptstyle  5$& $\scriptstyle  6$ & $\scriptstyle  7$ & $\scriptstyle  8$ & $\scriptstyle  9$ & $\scriptstyle  10$ & $\scriptstyle  11$ & $\scriptstyle  12$ & $\scriptstyle  13$ \\
\hline
 $\scriptstyle  0$ &\st & - & - & - & - & - & - & - & - & - & - & - & - & -  \\ \cline{2-2}
 $\scriptstyle  1$ &\multicolumn{1}{c|}{-} &\st&\st&\et&\et&\et&\et&\et&\et&\et&\et&\et&\et&\et  \\  \cline{3-4}
 $\scriptstyle  2$ & - & - &\multicolumn{1}{c|}{-}&\et&\et&\et&\et&\et&\et&\et&\et&\et&\et&\et  \\  \cline{5-6}
 $\scriptstyle  3$ & - & - & - & \multicolumn{1}{c|}{\bu}&\multicolumn{1}{c|}{\bc}&\et&\et&\et&\et&\et&\et&\et&\et&\et \\  \cline{6-8}
 $\scriptstyle  4$ & - & - & - & \ci & \bu & \multicolumn{1}{c|}{\ze} &\multicolumn{1}{c|}{\bc}&\et&\et&\et&\et&\et&\et&\et   \\  \cline{8-10}
 $\scriptstyle  5$ & - & - & - & \ci & \ci & \bu & \multicolumn{1}{c|}{\ze} &\bc&\multicolumn{1}{c|}{\bc}&\et&\et&\et&\et&\et  \\ \cline{9-12}
 $\scriptstyle  6$ & - & - & - & \ci & \ci & \ci & \bu &\ze &\multicolumn{1}{c|}{\ze}&\bc&\multicolumn{1}{c|}{\bc}&\et&\et&\et   \\  \cline{11-11} \cline{13-14}
 $\scriptstyle  7$ & - & - & - & \ci & \ci & \ci & \ci &\bu &\ze &\multicolumn{1}{c|}{\ze}&\bc&\bc&\multicolumn{1}{c|}{\bc} &\et  \\  \cline{12-13} \cline{15-15}
 $\scriptstyle  8$ & - & - & - & \ci & \ci & \ci & \ci &\ci &\bu &\ze &\ze &\multicolumn{1}{c|}{\ze} &\bc &\bc \\  \cline{14-15}
 \hline\hline
 $\scriptstyle  p$ &&&&&                              &$\scriptstyle 3$ &$\scriptstyle  $ &$\scriptstyle 2$   &$\scriptstyle 3$   &$\scriptstyle 5$   &$\scriptstyle $   &$\scriptstyle $   &$\scriptstyle $   &$\scriptstyle 7$   
\end{tabular}
}
\intertext{\indent
Let $R$ be a general Gorenstein algebra with Hilbert series $1+5z+5z^2+z^3$. 
It is Koszul by \cite[6.3]{CRV}.  When $p\ne2$, by specializing Schreyer's result \cite[p.\,107]{S} 
to the Artinian case one sees that the Betti table of $R$ is given by}
&\text{
 \begin{tabular}{c|cccccccccccccc}
 &$\scriptstyle  0$ &$\scriptstyle  1$ & $\scriptstyle  2$ & $\scriptstyle  3$& $\scriptstyle  4$ & $\scriptstyle  5$ & $\scriptstyle  6$  \\
\hline
 $\scriptstyle  0$ & $\scriptstyle\textbf{1}$ & -   &    -  & -  & -   & - & -    \\
 $\scriptstyle  1$ & - & $\scriptstyle\textbf{10}$ & $\scriptstyle\textbf{16}$  & -  & -   & - & -      \\  
 $\scriptstyle  2$ & - &   -  & -    &$\scriptstyle\textbf{16}$ &$\scriptstyle\textbf{10}$ & - & -       \\  
 $\scriptstyle  3$ & - & -    & -    & -  & -    &$\scriptstyle\textbf{1}$ & -    \\  
 $\scriptstyle  4$ & - & -    & -    & -  & -    & - & -    
  \end{tabular}
  }
  \end{align*}

Thus, when $q=2$ there is a Koszul $k$-algebra $R$ that is a Gorenstein ring, and for 
which equalities hold in \eqref{eq:conjecture1} with $i=3$ and $i=5$. 
   \end{example} 

Property $N_3$ appears prominently in recent studies of a basic construction:

  \begin{bfchunk}{Segre products.}
    \label{ch:segre}
Let $(e_1,\dots,e_s)$ be an $s$-tuple of positive integers with $s\ge2$.

Let $R^{(e_1,\dots,e_s)}$ be the coordinate ring of the Segre embedding 
${\mathbb P}^{e_1}_k\times\cdots\times{\mathbb P}^{e_s}_k\to{\mathbb P}^{e}_k$, where $e=\prod_{i=1}^s(e_i+1)-1$.
It is Koszul; see B\u{a}rc\u{a}nescu and Manolache \cite[2.1]{BMa}.  

When $k=\mathbb C$ Rubei \cite[Theorem 10]{Ru} proved that $R^{(e_1,\dots,e_s)}$ satisfies $N_3$, but not $N_4$ 
unless $s=2$ and $e_1=1$ or $e_2=1$.  Snowden developed a functorial method for building its syzygies:  
``One consequence of this result is that almost any question about $p$-syzygies---such as, 
is the module of $13$-syzygies of every Segre embedding generated in degrees at most $20$?---can be 
resolved by a finite computation (in theory);'' see \cite[p.\,226]{Sn}. Column $13$ in the next example gives 
a positive answer.  
  \end{bfchunk}

    \begin{example} 
      \label{Rubei} 
The first $16$ columns and $8$ rows of the Betti template for $N_3$:
\begin{align*}
&\text{
\begin{tabular}{c|ccccccccccccccccc}
  &$\scriptstyle  0$ &$\scriptstyle  1$ & $\scriptstyle  2$ & $\scriptstyle  3$& $\scriptstyle  4$ & $\scriptstyle  5$& $\scriptstyle  6$ & $\scriptstyle  7$ & $\scriptstyle  8$ & $\scriptstyle  9$ & $\scriptstyle  10$ & $\scriptstyle  11$ & $\scriptstyle  12$ & $\scriptstyle  13$ & $\scriptstyle  14$ & $\scriptstyle  15$ & $\scriptstyle  16$ \\
\hline
 $\scriptstyle  0$ &\st& - & - & - & - & - & - & - &  - & - & - & - & - & - & - & - & -  \\ \cline{2-2}
 $\scriptstyle  1$ &\multicolumn{1}{c|}{-} &\st&\st&\st&\et&\et &\et&\et&\et&\et&\et&\et&\et&\et&\et&\et&\et \\  \cline{3-5}
 $\scriptstyle  2$ & - & - & - &\multicolumn{1}{c|}{-} &\et&\et &\et&\et&\et&\et&\et&\et&\et&\et&\et&\et&\et \\  \cline{6-8}
 $\scriptstyle  3$ & - & - & - & - & \multicolumn{1}{c|}{\ze}&\bc &\multicolumn{1}{c|}{\bc} & \et&\et&\et&\et&\et&\et&\et&\et&\et&\et \\  \cline{7-11}
 $\scriptstyle  4$ & - & - & - & - & \bu &\ze&\ze& \multicolumn{1}{c|}{\ze} & \bc&\multicolumn{1}{c|}{\bc}&\et&\et&\et&\et&\et&\et&\et  \\ \cline{10-10} \cline{12-14}
 $\scriptstyle  5$ & - & - & - & - & \ci &\bu &\ze&\ze &\multicolumn{1}{c|}{\ze} & \bc & \bc &\bc&\multicolumn{1}{c|}{\bc}&\et&\et&\et&\et  \\ \cline{11-13} \cline{15-17}
 $\scriptstyle  6$ & - & - & - & - & \ci &\ci &\bu &\ze & \ze&\ze &\ze & \multicolumn{1}{c|}{\ze} &\bc&\bc&\bc&\multicolumn{1}{c|}{\bc}&\et \\  \cline{14-14} \cline{18-18}
 $\scriptstyle  7$ & - & - & - & - & \ci &\ci &\ci &\bu & \ze&\ze &\ze&\ze & \multicolumn{1}{c|}{\ze} &\bc&\bc&\bc&\bc \\  \cline{15-17} 
 $\scriptstyle  8$ & - & - & - & - & \ci &\ci &\ci &\ci & \bu&\ze &\ze &\ze &\ze &\ze &\ze & \multicolumn{1}{c|}{\ze} &\bc \\  \cline{18-18} 
 \hline\hline
 $\scriptstyle  p$ &&&&&                              &$\scriptstyle 3$ &$\scriptstyle  $ &$\scriptstyle 2$   &$\scriptstyle 3$   &$\scriptstyle 5$   &$\scriptstyle $   &$\scriptstyle $   &$\scriptstyle $   &$\scriptstyle 7$   &$\scriptstyle 5$   &$\scriptstyle 2$   &$\scriptstyle $
    \end{tabular}
}
  \end{align*}

The algebra $R^{(1,1,1)}$  has $N_3$ by \ref{ch:segre}, it is Gorenstein with $H_R=(1+4t+t^2)/(1-t)^4$
by \cite[3.2(7)]{BMa}, and so it is Koszul by \ref{wahl}. 
 \end{example} 

Equality can hold in \eqref{eq:conjecture2} for any $q=h-1\ge2$ by \ref{wahl} and with $(q,h)=(2,5)$
by Example \ref{1551}.  However, \eqref{eq:conjecture2} may be far from optimal when $\pd_SR$ is large.  
In fact, we know of no family of Koszul algebras satisfying $N_2$  and with projective dimensions bounded 
above by some linear function of their regularities.  On the other hand, families with quadratic bounds do exist:

  \begin{bfchunk}{Grassmannians.}
    \label{ch:grass}
Let $R^{[n]}$ be the coordinate ring of the Grassmannian of $2$-dimensional subspaces of the affine space 
${\mathbb A}^n_k$.  It is classically known that the $k$-algebra $R^{[n]}$ is minimally generated by $\binom n2$ 
elements, and Hochster \cite[3.2]{Ho} proved that it is a Gorenstein ring of Krull dimension $2n-3$.  It is also 
Koszul, see \cite{Ke}, and when $p=0$ or $p>\frac12(n-4)$ has property $N_2$ by Kurano \cite[5.3]{Ku}.  

The following hold: $\reg(R^{[n]})=n-3$ by \cite[1.4]{BH} but $\pd(R^{[n]})=\binom{n-2}{2}$.
  \end{bfchunk}

In a different direction, Dao, Huneke, and Schweig \cite[4.8]{DHS} obtained an upper bound on the regularity of 
Koszul algebras with monomial relations satisfying $N_q$ for some $q\ge2$ in terms of the number $e$ of algebra 
generators: They exhibit an explicit polynomial $f_q(x)\in \BR[x]$ and a real number $\alpha_q\in \BR$ with 
$\alpha_q>1$, such that
  \[
\reg R\leq \log_{\alpha_q} (f_q(e))\,.
  \]
Our last example shows that no bound of similar type can exist for the class of \textit{all} Koszul algebras 
satisfying~$N_q$, since it contains the following family:

  \begin{bfchunk}{Veronese subalgebras.}
Let $R^{q,n}$ be the $k$-subalgebra generated by all the monomials of degree $q$ in a polynomial ring in 
$qn$ variables.  It is Koszul by \cite[2.1]{BMa}, has $N_q$ by \cite[4.2]{BCR1}, and needs $\binom{qn+q-1}{q}$ 
generators, and its regularity is equal to $(q-1)n$; the last expression is well known:  It follows easily from 
the results of Goto and Watanabe in \cite[Ch.\,3]{GW} and is given explicitly by Nitsche in \cite[4.2]{Ni}.
  \end{bfchunk}

 \end{document}